\newtheorem{theorem}{Theorem}[section]
\theoremstyle{plain}
\newtheorem{conjecture}[theorem]{Conjecture}
\newtheorem{question}[theorem]{Question}
\newtheorem{corollary}[theorem]{Corollary}
\newtheorem{lemma}[theorem]{Lemma}
\newtheorem{proposition}[theorem]{Proposition}
\theoremstyle{definition}
\newtheorem{remark}[theorem]{Remark}
\numberwithin{equation}{section}
\newcommand\R{{\mathbb R}}
\newcommand\Q{{\mathbb Q}}
\newcommand\N{{\mathbb N}}
\newcommand\Z{{\mathbb Z}}
\newcommand\mca{{\mathcal A}}
\newcommand\mcb{{\mathcal B}}
\newcommand\mcc{{\mathcal C}}
\newcommand\rect{{\mathcal S}}
\newcommand\irange{{\mathcal I}}
\DeclareMathOperator*{\orb}{orb}
\DeclareMathOperator*{\area}{area}
\DeclareMathOperator*{\id}{id}
\DeclareMathOperator*{\inter}{int}
\providecommand{\q}[1]{\lq#1\/\rq}
\begin{document}
\title[Dynamics of a Continuous Piecewise Affine Map of the Square]{Dynamics of a Continuous Piecewise Affine Map of the Square}
\author{Georg Ostrovski}
\address{Mathematics Department, University of Warwick}
\email{g.ostrovski@gmail.com}

\begin{abstract}
We present a one-parameter family of continuous, piecewise affine, area preserving maps of the 
square, which are inspired by a dynamical system in game theory. Interested in the coexistence 
of stochastic and (quasi-)periodic behaviour, we investigate invariant annuli separated by invariant 
circles. For certain parameter values, we explicitly construct invariant circles both of rational and
irrational rotation numbers, and present numerical experiments of the dynamics on 
the annuli bounded by these circles.
\end{abstract}

\maketitle

\section{Introduction}
Piecewise affine maps and piecewise isometries have received a lot of attention, either as simple, 
computationally accessible models for complicated dynamical behaviour, or as a class of systems 
with their own unique range of dynamical phenomena. For a list of examples, see 
\cite{Bullett1986,Wojtkowski1980,Wojtkowski1981,Wojtkowski2008,Przytycki1983,
Devaney1984,Aharonov1997,Gutkin1995,Goetz2000,Adler2001,Lagarias2005a,Ashwin2005,Beardon,Reeve-Black2013,Ostrovski2010} and references therein.

In this paper, we investigate a family of piecewise affine homeomorphisms of the unit square, 
motivated by a particular class of dynamical systems 
modeling learning behaviour in game theory, the so-called Fictitious Play dynamics 
(see \cite{Sparrow2007,VanStrien2010,VanStrien2011}).
It was shown in \cite{VanStrien2010} that this dynamics can be represented by a flow on $S^3$ with a topological disk 
as a global first return section, whose first return map is \emph{continuous, piecewise affine, area preserving
and fixes the boundary of the disk pointwise}. For an investigation of the itinerary 
structure and some numerical simulations of such first return maps, see \cite{Ostrovski2010}.

The maps considered in this paper are chosen to satisfy these properties, 
and the nine-piece construction considered here 
seems to be the simplest possible (nontrivial) example satisfying all of them.
Its qualitative behaviour resembles that seen in 
\cite{Ostrovski2010} for the first return maps of Fictitious Play; in particular, the ways in which
stochastic and (quasi-)periodic behaviour coexist seem to be of similar type, giving rise to similar
phenomena.

In Sections \ref{sec:construction} and \ref{sec:properties} 
we present a geometric construction of our family of maps
and describe its basic formal properties. Then, interested in the long-term behaviour of iterates 
of the maps, our next goal is to establish the existence of certain invariant regions.
For that, in Section \ref{sec:inv_circles}
we develop some technical results about periodic orbits and invariant curves, and in Section \ref{sec:spec_vals}
prove their existence for certain parameter values. 
Finally, in  Section \ref{sec:gen} we discuss the dynamics for more general parameter values, 
present numerical observations and discuss open questions.

\section{Construction of the map}\label{sec:construction}

Let us denote the unit square by $\rect = [0,1]\times [0,1]$. 
We construct a one-parameter family of continuous, piecewise affine maps
$F_\theta \colon \rect \to \rect$, $\theta \in (0,\frac{\pi}{4})$, 
as follows (see Fig.\ref{fig:construction} for an illustration). 

Denote the four vertices of $\rect$ by $E_1 = (0,0)$, $E_2 = (1,0)$, $E_3 = (1,1)$ and $E_4 = (0,1)$.
In the following we will use indices $i\in\irange = \{1,2,3,4\}$ with cyclic order, i.e., with the understanding
that index $i+1$ is $1$ for $i=4$ and index $i-1$ is $4$ for $i=1$.

Let $\theta \in (0,\frac{\pi}{4})$, and for $i \in\irange$ let $L_i$ be the ray through $E_i$,
such that the angle between the segment $\overline{E_i E_{i+1}}$ and $L_i$ 
is $\theta$. Let $P_i \in \inter(\rect)$ be the point $L_{i-1} \cap L_i$, then
the $P_i$, $i \in\irange$, form a smaller square inside $\rect$. 
Now we divide $\rect$ into the following nine regions (see Fig.\ref{fig:construction}(left)):
\begin{itemize}
 \item four triangles $\mca_i = \Delta(E_i,E_{i+1},P_i)$, $i\in\irange$, each adjacent to one of the 
sides of $\rect$;
\item four triangles $\mcb_i = \Delta(E_i,P_i,P_{i-1})$, $i\in\irange$, each sharing one side with 
$\mca_{i-1}$ and $\mca_i$;
\item a square $\mcc = \square(P_1,P_2,P_3,P_4)$, each side of which is adjacent to one of the $\mcb_i$.
\end{itemize}

Now, we repeat the same construction 'in reverse orientation', to obtain a second, very similar partition of $\rect$, 
as shown in Fig.\ref{fig:construction}(right). Here we denote the vertex of the inner square which has the same y-coordinate 
as $P_1$ by $P'_1$, and the other vertices of the inner square by $P'_2$, $P'_3$, $P'_4$, in counterclockwise order.
For $i\in\irange$ we denote the triangles $\mca'_i = \Delta(E_i, E_{i+1}, P'_i)$, $\mcb'_i = \Delta(E_i, P'_i, P'_{i-1})$
and the square $\mcc' = \square(P'_1,P'_2,P'_3,P'_4)$.

Finally, the map $F = F_\theta \colon \rect \to \rect$ is uniquely defined by the data
\begin{itemize}
 \item $F(E_i) = E_i$, $i\in\irange$;
 \item $F(P_i) = P'_i$, $i\in\irange$;
 \item $F$ affine on each of the pieces $\mca_i$, $\mcb_i$, and $\mcc$.
\end{itemize}

\begin{figure}
\centering
\includegraphics[width=\textwidth]{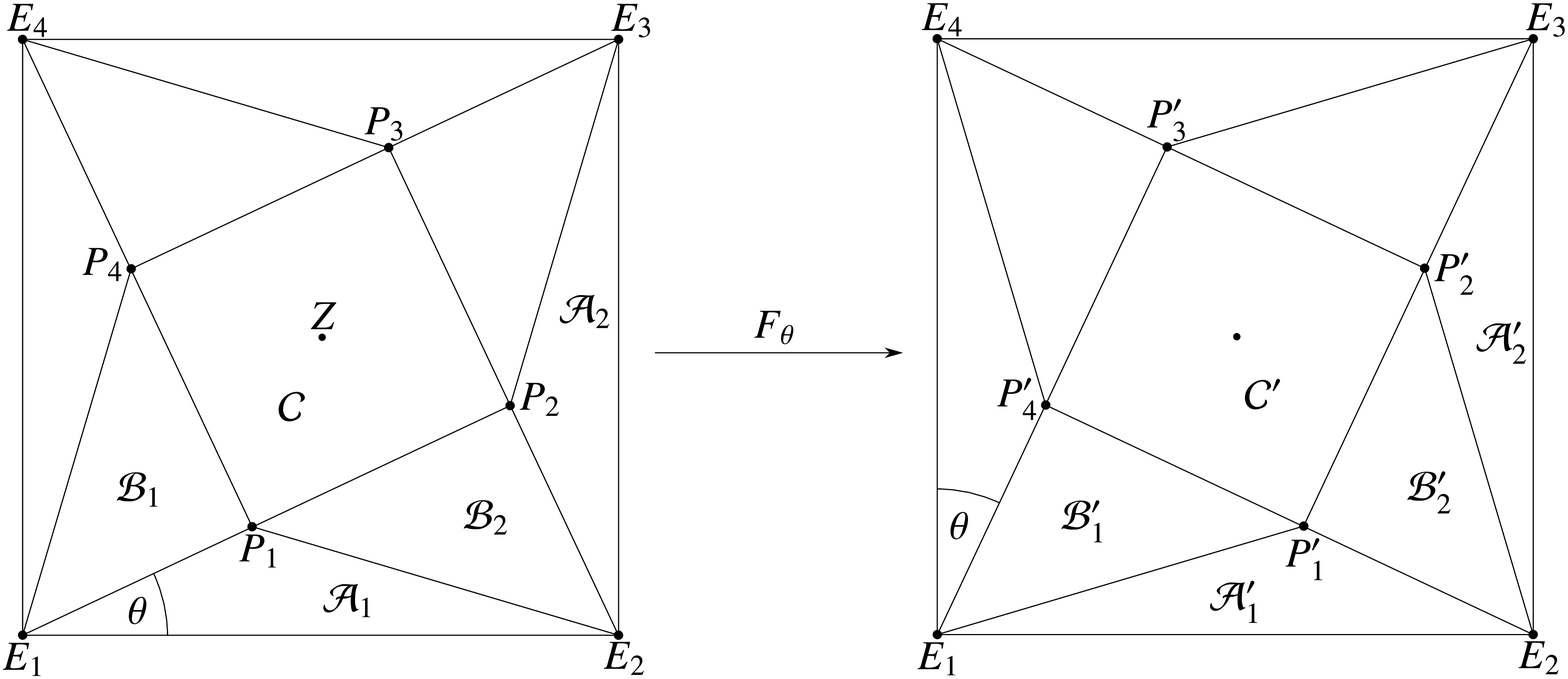}
\caption{Construction of the map $F = F_\theta$: $F(E_i) = E_i$, $F(P_i) = P'_i$;
$F$ is affine on each of $\mca_i$, $\mcb_i$ and $\mcc$, 
such that $F(\mca_i) = \mca'_i$, $F(\mcb_i) = \mcb'_i$ and $F(\mcc) = \mcc'$.}
\label{fig:construction}
\end{figure}

\section{Properties of $F$}\label{sec:properties}

It is easy to see that $F$ is a piecewise affine 
homeomorphism of $\rect$, with $F(\mca_i) = \mca'_i$ and $F(\mcb_i) = \mcb'_i$ for each $i\in\irange$, 
and $F(\mcc) = \mcc'$.
Moreover, $F$ is area and orientation preserving, 
since $d F$ is constant on each of the pieces, with $\det dF = 1$ everywhere.
Note that while $F$ is continuous, its derivative $dF$ has discontinuity lines along the boundaries
of the pieces; we call these the \emph{break lines}. 
Note also that $F\vert_{\partial \rect} = \id$.

We denote $P_1 = (s,t)$, where $t \in (0,\frac{1}{2})$ and $s \in (0,\frac{1}{2})$.
The coordinates of the other points $P_i$ and $P'_i$ are then given by symmetry. Simple geometry gives that 
$s$, $t$ and $\theta$ satisfy
\begin{equation}\label{eq:relations}
 t - t^2 = s^2, \quad t = \sin^2 \theta, \quad s = \sin \theta \cos \theta.
\end{equation}

The map $F$ is given by three types of affine maps $A$, $B$, and $C$:
\begin{itemize}
 \item  $A \colon \mca_1 \to \mca'_1$ is a shear fixing 
$\overline{E_1 E_2} = [0,1]\times\{0\}$ and mapping $P_1$ to $P'_1$:
\begin{equation*}
 A(x,y) = \begin{pmatrix} 1 & \frac{1-2s}{t} \\ 0 & 1 \end{pmatrix} \begin{pmatrix} x \\ y \end{pmatrix}.
\end{equation*}
It leaves invariant horizontal lines and moves points 
in $\inter(\mca_1)$ to the right (since $(1-2s)/t>0$).

\item $B \colon \mcb_1 \to \mcb'_1$ is a linear scaling 
map with a contracting and an expanding direction,
defined by $B(E_1) = E_1$, $B(P_1) = P'_1$ and $B(P_4) = P'_4$: 
\begin{equation*}
B(x,y) = \frac{1}{t-s} \begin{pmatrix}t^2-(1-s)^2 & (1-2s)t \\ (2s-1)t & (2t-1)t\end{pmatrix} 
\begin{pmatrix} x \\ y \end{pmatrix}.
\end{equation*}
It can be checked that the contracting direction of $B$ 
lies in the sector between $\overline{E_1 P_4}$ and $\overline{E_1 E_4}$, 
and the expanding direction in the sector betwen $\overline{E_1 E_2}$ and $\overline{E_1 P_1}$.
From general theory we also have that $B$ preserves the quadratic form given by 
\begin{equation}\label{eq:quad_form_B}
 Q_B(x,y) = t (x^2 + y^2) - x y . 
\end{equation}

\item $C \colon \mcc \to \mcc'$ is the rotation about 
$Z = (\frac{1}{2},\frac{1}{2})$, mapping $P_i$ to $P'_i$, $i \in\irange$: 
\begin{equation*}
 C(x,y) = \begin{pmatrix}2s & 2t-1 \\ 1-2t & 2s\end{pmatrix} \begin{pmatrix} x \\ y \end{pmatrix}  
+ \begin{pmatrix} 1-t-s \\ t-s \end{pmatrix}.
\end{equation*}
The rotation angle is $\alpha = \frac{\pi}{2} - 2 \theta$, where $\theta$ is the parameter angle
in the construction of the map $F$.
\end{itemize}
All other pieces of $F$ are analogous, by symmetry of the construction. 
To capture this high degree of symmetry, we make the following observations
which follow straight from the definition of $F$.

\begin{lemma} \label{lem:rot_sym}
 Let $R$ denote the rotation about $Z$ by the angle $\frac{\pi}{2}$. Then $F$ and $R$ commute:
\begin{equation*}
 F \circ R = R \circ F .
\end{equation*}
\end{lemma}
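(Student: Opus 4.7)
The plan is to exploit the fourfold rotational symmetry of the construction. Concretely, $R$ is given by $R(x,y) = (1-y, x)$, and a direct check shows $R(E_i) = E_{i+1}$ for each $i \in \irange$. The key observation I would establish next is that the unprimed and primed partitions of $\rect$ are both $R$-invariant as decompositions: because the defining data of the construction (the angle $\theta$, the rays $L_i$, the choice of side-adjacent triangles $\mca_i$ etc.) is invariant under the cyclic relabelling $i \mapsto i+1$, one gets $R(P_i) = P_{i+1}$ and $R(P'_i) = P'_{i+1}$, and consequently $R(\mca_i) = \mca_{i+1}$, $R(\mcb_i) = \mcb_{i+1}$, $R(\mcc) = \mcc$, with the analogous statements for the primed pieces.

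The second step is to show that $F \circ R$ and $R \circ F$ agree on each piece of the domain partition. On the triangle $\mca_i$, both compositions are affine and both send $\mca_i$ onto $\mca'_{i+1}$ (via $\mca_{i+1}$ on one side, via $\mca'_i$ on the other). Since an affine map of the plane is determined by its values on three non-collinear points, it suffices to compare the two compositions on the vertices $E_i, E_{i+1}, P_i$; combining the defining data $F(E_j) = E_j$, $F(P_j) = P'_j$ with $R(E_j) = E_{j+1}$ and $R(P_j) = P_{j+1}$ gives agreement immediately. The same check handles each $\mcb_i$ using the vertices $E_i, P_i, P_{i-1}$.

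For the central square $\mcc$ the argument simplifies further: the excerpt identifies $F|_\mcc = C$ as the rotation about $Z$ by $\alpha = \frac{\pi}{2} - 2\theta$, and $R$ is by definition the rotation about the same centre $Z$ by $\frac{\pi}{2}$. Two rotations about a common centre commute, so $C \circ R = R \circ C$ on $\mcc$. Combining the three piecewise identities with the continuity of $F$ and $R$ finishes the proof.

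Honestly I do not expect any serious obstacle here; the content is entirely symmetry bookkeeping, and the only place where one has to pause is to make sure the two partitions (the one into $\mca_i, \mcb_i, \mcc$ and the one into $\mca'_i, \mcb'_i, \mcc'$) really are each individually invariant under $R$, which is why I placed that verification first.
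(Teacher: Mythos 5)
Your proof is correct and matches the paper's intent: the paper states this lemma without proof, remarking only that it ``follows straight from the definition of $F$,'' and your argument is precisely that symmetry bookkeeping made explicit --- checking $R$-equivariance of both partitions and then comparing the two affine compositions on the vertices of each piece. No gaps; this is the same approach, just written out in full.
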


\begin{lemma}
 Let $S(x,y) = (y,x)$. Then $F$ is \textit{$S$-reversible}, i.e., $S$ conjugates $F$ and $F^{-1}$: 
\begin{equation*}
 S \circ F \circ S = F^{-1}.
\end{equation*}
Further, $F$ is $T_1$- and $T_2$-reversible for the 
reflections $T_1 (x,y) = (1-x,y)$ and $T_2(x,y) = (x,1-y)$.
\end{lemma}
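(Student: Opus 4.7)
The plan is to first establish $S F S = F^{-1}$ directly from the piecewise-affine structure of $F$, and then to deduce the $T_1$- and $T_2$-reversibility from this together with the rotational symmetry of Lemma~\ref{lem:rot_sym}.

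The structural observation I need first is that $S$ permutes the nine pieces of the first partition $\{\mca_i,\mcb_i,\mcc\}$ onto the nine pieces of the second $\{\mca'_i,\mcb'_i,\mcc'\}$. This reduces to a short coordinate check using \eqref{eq:relations}: $S$ fixes $E_1$ and $E_3$, swaps $E_2\leftrightarrow E_4$, and sends $P_i\mapsto P'_{5-i}$ (indices mod $4$); tracking these vertex identifications, each triangle $\mca_i$ or $\mcb_i$ lands on some $\mca'_j$ or $\mcb'_j$, and $\mcc$ on $\mcc'$. Once this is in hand, since $F$ is affine on each piece of the first partition and $S$ is globally affine, $SFS$ is automatically piecewise affine on exactly the same nine pieces as $F^{-1}$. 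It will then suffice to check that $SFS$ and $F^{-1}$ agree on the vertices of each piece. For the corners, $F\vert_{\partial\rect}=\id$ and $S^2=\id$ give $SFS(E_i)=S(S(E_i))=E_i=F^{-1}(E_i)$. For the inner vertices, $SFS(P'_i)=S(F(P_{5-i}))=S(P'_{5-i})=P_i=F^{-1}(P'_i)$. Agreement on three non-collinear vertices (resp.\ on the four vertices of $\mcc'$) pins down any affine map, giving $SFS=F^{-1}$.

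For the $T_j$-reversibility, I will use the identities $RS=T_1$ and $SR=T_2$, both involutions, which follow immediately from the definitions. Combining $SF=F^{-1}S$ from the previous step with $RF=FR$ from Lemma~\ref{lem:rot_sym}, I obtain
\[
T_1 F T_1 = RSFRS = RF^{-1}SRS = F^{-1}(RS)^2 = F^{-1},
\]
and analogously
\[
T_2 F T_2 = SRFSR = SFRSR = F^{-1}(SR)^2 = F^{-1}.
\]
The main obstacle is the bookkeeping in the first step, namely verifying the index permutations that show $S$ interchanges the two partitions; the remaining steps are short formal manipulations.
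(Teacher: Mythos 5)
Your argument is correct, and the details check out against the coordinates forced by the construction: $P_1=(s,t)$, $P_4=(t,1-s)$, $P'_1=(1-s,t)$, $P'_4=(t,s)$, etc., give exactly $S(P_i)=P'_{5-i}$, hence $S(\mca_i)=\mca'_{1-i}$, $S(\mcb_i)=\mcb'_{2-i}$, $S(\mcc)=\mcc'$, so that $SFS$ and $F^{-1}$ are both affine on the primed pieces and agree on their vertices; the identities $T_1=RS$, $T_2=SR$ combined with Lemma~\ref{lem:rot_sym} then give the reflection reversibilities. The paper states this lemma without proof (as following straight from the definition), and your write-up is precisely the verification being left to the reader.
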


Heuristically, $F$ acts similarly to a twist map: The iterates $F^n(X)$ 
of any point $X\in \inter(\rect)$ rotate counterclockwise 
about $Z$ as $n \to \infty$. The 'rotation angle' 
(the angle between $\overline{Z X}$ and $\overline{Z F(X)}$)
 is not constant, 
but it is bounded away from zero as long as $X$ is bounded away
from $\partial \rect$; in particular, every point whose orbit stays bounded away from the boundary
runs infinitely many times around the centre. 

Note also that the rotation angle is
monotonically decreasing along any ray emanating from the centre $Z$. 
However, it is not strictly 
decreasing, as all points in $\mcc$ rotate by the same angle; this
sets the map $F$ apart from a classical twist map, for which strict monotonicity 
(the \q{twist condition}) is usually required.

\section{Invariant Circles}\label{sec:inv_circles}

Clearly, the circle inscribed to the inner square $\mcc$ and all concentric circles 
in it centred at $Z$ are invariant under $F$, which acts as a rotation on these circles. 
When $\theta$ is a rational multiple 
of $\pi$, the rotation $C$ is a rational (periodic) rotation, and a whole regular $n$-gon 
inscribed to $\mcc$ is $F$-invariant, see Fig.\ref{fig:firstparam}. 

We are interested in other invariant circles encircling $Z$, 
as these form barriers to the motion of points under $F$ and provide a partitioning 
of $\rect$ into $F$-invariant annuli.
Numerical simulations indicate that such curves exist for many parameter 
values $\theta$ and create invariant annuli, on which the motion is
predominantly stochastic. 

This section follows closely the arguments of Bullett \cite{Bullett1986}, 
where in a similar way, invariant circles are studied for a piecewise linear version of the standard map.
The idea is to study the orbits of the points where the invariant circles intersect break lines
and to prove that these follow a strict symmetry pattern, forming so-called cancellation orbits.

We consider invariant circles $\Gamma$ on which $F$ 
preserves the $S^1$-order of points. 
This, for example, is the case if all rays from $Z$ intersect 
the circle $\Gamma$ in precisely one point. 
For an invariant circle $\Gamma$, we denote the rotation number 
of $F \vert_\Gamma \colon \Gamma \to \Gamma$ 
by $\rho_\Gamma = \rho(F \vert_\Gamma)$.
By simple geometric considerations, we get the following lemma.

\begin{lemma}
Let $\Gamma_1, \Gamma_2$ be two invariant circles for $F$ encircling $Z$. 
If $\Gamma_1$ is contained in the component of $\rect \setminus \Gamma_2$ 
containing $Z$, then $\rho_{\Gamma_1} \geq \rho_{\Gamma_2}$. 
\end{lemma}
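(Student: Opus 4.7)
The plan is to reduce the comparison of rotation numbers to a pointwise comparison of lifts, using the monotonicity of $F$'s angular displacement along rays from $Z$ observed at the end of Section~\ref{sec:properties}.

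First I would introduce an angular coordinate on each invariant circle. Since $F$ preserves the $S^1$-order on $\Gamma_j$ ($j=1,2$) and $\Gamma_j$ encircles $Z$, every ray from $Z$ meets $\Gamma_j$ in exactly one point, so the angle $\phi$ of that ray gives a homeomorphism $\pi_j\colon S^1 \to \Gamma_j$, $\phi \mapsto X_j(\phi)$. Transporting $F|_{\Gamma_j}$ through $\pi_j$ yields orientation-preserving circle homeomorphisms $f_j\colon S^1 \to S^1$, for which I would choose lifts $\tilde f_j\colon \R \to \R$ with $\tilde f_j(\phi) - \phi$ equal to the (counterclockwise) rotation angle $\angle(\overline{Z X_j(\phi)},\overline{Z F(X_j(\phi))})$. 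With this convention $\rho_{\Gamma_j} = \lim_{n\to\infty} (\tilde f_j^n(\phi) - \phi)/(2\pi n)$.

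The key step is the pointwise comparison of these lifts. Fix $\phi$; the points $X_1(\phi)$ and $X_2(\phi)$ lie on the same ray from $Z$, and by the nesting hypothesis $X_1(\phi)$ lies strictly between $Z$ and $X_2(\phi)$. The monotonicity observation from Section~\ref{sec:properties} says that $\angle(\overline{ZX},\overline{Z F(X)})$ is a monotonically decreasing function of $X$'s position along the ray, so $\tilde f_1(\phi) \geq \tilde f_2(\phi)$ for every $\phi$. Invoking the standard monotonicity principle for rotation numbers (if $\tilde f_1 \geq \tilde f_2$ are lifts of orientation-preserving circle homeomorphisms, then $\rho(\tilde f_1) \geq \rho(\tilde f_2)$, a direct consequence of iterating the inequality and passing to the limit in the definition of $\rho$) then yields $\rho_{\Gamma_1} \geq \rho_{\Gamma_2}$.

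The only substantial obstacle is the monotonicity claim itself, which Section~\ref{sec:properties} states heuristically without proof. A formal verification would check it separately on each of the nine affine pieces: the rotation $C$ gives a constant angular increment on $\mcc$ so the statement is trivial there, the shears $A_i$ are elementary, and the hyperbolic pieces $B_i$ require a short direct computation from the explicit matrix formula for $B$. Once that geometric input is in hand, the argument above is essentially mechanical.
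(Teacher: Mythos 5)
Your approach is the one the paper intends: the paper offers no written proof beyond ``simple geometric considerations,'' and the relevant geometric fact is precisely the remark at the end of Section~\ref{sec:properties} that the angular displacement about $Z$ is (non-strictly) monotonically decreasing along rays from $Z$; combining that with the standard comparison principle for lifts of circle homeomorphisms is exactly the right reduction, and your normalization of the lift by the geometric angle (which lies in $[0,\alpha]$, bounded away from $2\pi$) correctly pins down the representative of the rotation number so that the inequality is meaningful. Two caveats. First, your opening inference is backwards: the paper states that star-shapedness about $Z$ \emph{implies} preservation of the $S^1$-order, not the converse, so ``every ray from $Z$ meets $\Gamma_j$ in exactly one point'' does not follow from order-preservation and must instead be taken as a hypothesis (harmless here, since all invariant circles the paper actually works with are star-shaped about $Z$, but without it the radial parametrization $\pi_j$ is not injective and the pointwise comparison of lifts breaks down). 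Second, the monotonicity of the angular displacement along rays is the entire mathematical content of the lemma, and you defer it; your plan to verify it piece by piece is the right one (constancy on $\mcc$, an elementary computation for the shears $A$, a genuine calculation for the hyperbolic pieces $B$, plus continuity of the displacement across break lines to glue the pieces along a single ray), but until that computation is done the proof is a reduction rather than a proof --- the same standard to which the paper itself, admittedly, does not hold this lemma.
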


In other words, if there is a family of such nested invariant circles, 
their rotation number is monotonically decreasing as the circles approach $\partial \rect$. 
It also follows that the rotation number $\rho$ of any orbit is bounded above
by the rotation number on the centre piece $\mcc$, i.e., 
\[0 \leq \rho \leq \frac{\alpha}{2 \pi} = \frac{1}{4}-\frac{\theta}{\pi}.\]

We now consider $F$-invariant circles near the boundary $\partial \rect$, 
which do not intersect the centre piece $\mcc$. 
Any such curve $\Gamma$ intersects exactly two types of break line segments:
the segments $\mcb_i \cap \mca_i = \overline{E_i P_i}$ 
and $\mca_i \cap \mcb_{i+1} = \overline{E_{i+1} P_{i+1}}$, $i\in\irange$.
Let us call these intersection points $U_i = \Gamma \cap (\mcb_i \cap \mca_i)$ 
and $V_i = \Gamma \cap (\mca_i \cap \mcb_{i+1})$.

We say that an invariant curve which encircles $Z$ and on which $F$ preserves the $S^1$-order 
is \emph{rotationally symmetric}, 
if it is invariant under the rotation $R$ (cf.~Lemma \ref{lem:rot_sym}): $R(\Gamma) = \Gamma$.

In the remainder of this section we will use the fact that any invariant circle $\Gamma$
with rational rotation number is of one of the following two types \cite{Katok1983}: 
\begin{itemize}
 \item pointwise periodic, that is, $F \vert_\Gamma$ is conjugate to a rotation of the circle;
 \item non-periodic, that is, $F \vert_\Gamma$ is not conjugate to a rotation; however, in this case, 
 $F \vert_\Gamma$ still has at least one periodic orbit. 
\end{itemize}

Following the ideas in \cite{Bullett1986}, we now prove a number of results illustrating
the importance of the orbits of $U_i$ and $V_i$ for the invariant circle containing them. 

\begin{lemma}\label{lem:F_per_case}
Let $\Gamma$ be a rotationally symmetric invariant circle disjoint from $\mcc$. Assume that $F\vert_\Gamma$ 
has rational rotation number $\rho_\Gamma = \frac{p}{q} \in \Q$, and that $F\vert_\Gamma$ is periodic. 
Then for $i\in\irange$ the orbit $\orb(U_i) = \{F^n(U_i)\colon n \in \Z \}$ contains some $V_j$ , 
$j \in\irange$, and vice versa. 
\end{lemma}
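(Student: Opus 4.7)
The plan is to proceed by contradiction. Suppose that the orbit $\orb(U_i)$ contains no $V_j$, $j \in \irange$. Since $F\vert_\Gamma$ is assumed conjugate to a rational rotation of period $q$, we have $F^q = \id$ pointwise on $\Gamma$. The orbit then consists of exactly $q$ distinct points $X_k := F^k(U_i)$ on $\Gamma$, and by our hypothesis each $X_k$ is either not a break point of $F$ at all, or else a $U$-type break point $U_{j_k}$ for some $j_k \in \irange$.

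Next, choose a small arc $\gamma = \gamma^- \cup \{U_i\} \cup \gamma^+$ of $\Gamma$ about $U_i$, split by the break line $\overline{E_i P_i}$ so that $\gamma^- \subset \mcb_i$ and $\gamma^+ \subset \mca_i$. Because $F$ preserves the $S^1$-order on $\Gamma$, at each step $k$ the images $F^k(\gamma^-)$ and $F^k(\gamma^+)$ sit immediately counterclockwise-before and -after $X_k$, respectively. When $X_k$ is not a break point both arcs lie in a common affine piece; when $X_k = U_{j_k}$ they lie in $\mcb_{j_k}$ and $\mca_{j_k}$ respectively, so the local affine rule of $F$ applied to $F^k(\gamma^-)$ is $B_{j_k}$ and that applied to $F^k(\gamma^+)$ is $A_{j_k}$. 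The excluded alternative $X_k = V_j$ is precisely what would split the two sides across $\mca_j$ and $\mcb_{j+1}$ instead. Composing the affine rules along the two sequences over $k = 0, \ldots, q-1$ yields affine maps $W_-$ and $W_+$ which agree with $F^q$ in a one-sided neighborhood of $\gamma^\pm$, and hence (by $F^q\vert_\Gamma = \id$) restrict to the identity on the respective arcs $\gamma^\pm$. Since an area-preserving affine map that fixes a line segment pointwise is either the identity on $\R^2$ or a shear along the line through that segment, $W_\pm$ are very constrained.

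The contradiction then comes from comparing two descriptions of $W_- - W_+$. A telescoping expansion across the splitting indices $\{0, k_1, \ldots, k_m\}$ writes its linear part as a sum of terms $G_l \circ (B_{j_l} - A_{j_l}) \circ H_l$ with $G_l, H_l$ invertible; by continuity of $F$ across the break line $\overline{E_{j_l} P_{j_l}}$, each $B_{j_l} - A_{j_l}$ is a rank-one linear map whose kernel is the direction of that break line. On the other hand, $W_- - W_+$ is the difference of two shears (or identity maps) pinned by the line directions of $\gamma^\pm$ through $U_i$. Matching these two descriptions, and using the $\pi/2$-rotational symmetry of Lemma \ref{lem:rot_sym} to control the image directions of the $B_{j_l} - A_{j_l}$ (which are rotations of each other by integer multiples of $\pi/2$), forces every $B_{j_l} - A_{j_l}$ to vanish, contradicting the fact that $A$ and $B$ are distinct affine maps.

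The main obstacle is the bookkeeping of this telescoping sum when the orbit contains several $U$-type break points: one must rule out cancellations between the rank-one contributions, which is where the $\pi/2$-rotational symmetry becomes decisive. A minor technical nuisance is separating the case where $\gamma^-$ and $\gamma^+$ happen to lie along the same line through $U_i$ from the generic case of a kink, but this does not affect the essential argument. The \emph{vice versa} statement, that $\orb(V_j)$ contains some $U_i$, follows either by running the same argument with the break line $\overline{E_{j+1} P_j}$ replacing $\overline{E_i P_i}$, or more compactly by invoking the $S$-reversibility of $F$ from Section \ref{sec:properties}, which conjugates $F$ to $F^{-1}$ and interchanges the $U$- and $V$-type break lines.
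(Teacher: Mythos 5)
Your setup is sound and genuinely different from the paper's: you split a small arc at $U_i$ into $\gamma^-\subset\mcb_i$ and $\gamma^+\subset\mca_i$, track the two one-sided affine compositions $W_\pm$ realizing $F^q$, and use $F^q\vert_\Gamma=\id$ to constrain $W_\pm$ to be identities or shears. The paper instead argues with areas: it takes the two points $X,Y$ of the finite set $\mathcal{O}=\bigcup_i\orb(U_i)\cup\bigcup_j\orb(V_j)$ nearest to $U_1$, notes that the kink at $U_1$ forces $F$ to map $\Delta(X,U_1,Y)$ to a quadrangle properly containing or properly contained in $\Delta(F(X),F(U_1),F(Y))$, that no further bending occurs until $\orb(U_1)$ next meets a break point (necessarily some $U_j$ under the contradiction hypothesis), and that by the $R$-symmetry the final triangle is a rotated copy of the original one --- contradicting area preservation in one step.

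The gap in your version is the final step. Writing the linear part of $W_--W_+$ as a telescoping sum $\sum_l G_l(dB_{j_l}-dA_{j_l})H_l$ of rank-one maps and concluding that every $B_{j_l}-A_{j_l}$ must vanish is not justified: a sum of rank-one matrices can equal a prescribed matrix (or zero) with all summands nonzero, and the conjugators $G_l,H_l$ are essentially arbitrary products of $A$- and $B$-pieces, so the $\pi/2$-symmetry relating the $dB_j-dA_j$ to one another does not control the images and kernels of the individual terms $G_l(dB_{j_l}-dA_{j_l})H_l$. What actually blocks cancellation is a positivity statement you never isolate: each $B_jA_j^{-1}$ is a nontrivial shear twisting tangent directions in the same rotational sense (every $U$-type crossing bends $\Gamma$ the same way; only a $V$-type crossing can bend it back), and one must show that a product of coherently oriented parabolic conjugates cannot be the identity or a shear fixing a line transverse to the break line --- which is essentially the paper's area-monotonicity argument in disguise. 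As written, your proposal asserts the conclusion of this step rather than proving it. A secondary loose end: even with a single break-point hit, closing the argument requires identifying the fixed line of the resulting shear with $\overline{E_iP_i}$ and observing that $\gamma^\pm$ cannot lie in that line since they sit strictly on either side of it; this case analysis (including the case where both $W_\pm$ are nontrivial shears along a common line) is missing.
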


\begin{proof}
By symmetry it is sufficient to show the result for $U_1$. 
Suppose for a contradiction that $V_j \notin \orb(U_1)$ for all $j\in\irange$.
Let $\mathcal{O} = (\bigcup_i \orb(U_i)) \cup (\bigcup_j \orb(V_j))$, which by periodicity of $F$ is finite. 
Let $X,Y \in \mathcal{O}$ be the two points closest to $U_1$ on either side along $\Gamma$. 
Consider the line segment $\overline{X Y}$ which crosses the break line going through $U_1$. 
Then $F (\overline{X Y})$ is a \q{bent} line, consisting of two straight line segments, 
so that $F$ maps the triangle $\Delta(X,U_1,Y)$ to a quadrangle. 

Now, by assumption and periodicity of $F$, there exists $k > 1$ such that $F^k(U_1) = U_j$ for some $j \in \irange$ and 
$F^l(U_1) \notin \{U_i: i \in \irange \} \cup \{V_j: j \in \irange\} $ for $1 \leq l < k$. 
This implies that the triangle $\Delta(F(X),F(U_1),F(Y))$ is 
mapped by $F^{k-1}$ to the triangle $\Delta(F^k(X),F^k(U_1),F^k(Y))$ without
bending any of its sides (since $X$ and $Y$ are the points in $\mathcal{O}$ closest to $U_1$). 

By symmetry we have that $\Delta(F^k(X),F^k(U_1),F^k(Y)) = \tilde R (\Delta(X,U_1,Y))$,
where $\tilde R$ is the rotation about $Z$ by one of the angles $0, \frac{\pi}{2}, \pi, \frac{3\pi}{2}$, and hence
\begin{equation*}
 \area(\Delta(F^k(X),F^k(U_1),F^k(Y))) = \area(\Delta(X,U_1,Y)).
\end{equation*}
But $F$ is area preserving, so 
\begin{equation*} 
\area(\Delta(F^k(X),F^k(U_1),F^k(Y))) = \area(\Delta(F(X),F(U_1),F(Y))),
\end{equation*}
which is a contradiction because $F$ maps $\Delta(X,U_1,Y)$ 
to a quadrangle which either properly contains or is properly contained in $\Delta(F(X),F(U_1),F(Y))$.
\end{proof}

With a slightly bigger effort, we can extend the result to the case of non-periodic $F\vert_\Gamma$.

\begin{lemma}\label{lem:F_nonper_case}
Let $\Gamma$ be a rotationally symmetric invariant circle disjoint from $\mcc$. 
Assume that $F\vert_\Gamma$ 
has rational rotation number $\rho_\Gamma = \frac{p}{q} \in \Q$, and that $F\vert_\Gamma$ is not periodic. 
Then for $i\in\irange$, the orbit of $U_i$ contains some $V_j$ , 
$j \in\irange$, and vice versa. 
\end{lemma}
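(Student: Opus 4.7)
The plan is to imitate the area contradiction of Lemma \ref{lem:F_per_case}. Periodicity of $F\vert_\Gamma$ is used there only in two places: to guarantee (i) that the set $\mathcal{O} = \bigcup_i \orb(U_i) \cup \bigcup_j \orb(V_j)$ is finite, so that immediate neighbors $X, Y$ of $U_1$ along $\Gamma$ exist in $\mathcal{O}$, and (ii) that $\orb(U_1)$ returns to a $U$- or $V$-point in finite time. The task is to recover both under the weaker assumption of rational rotation number.

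For (i), the dichotomy recalled before Lemma \ref{lem:F_per_case} provides that, in the non-periodic case, $F\vert_\Gamma$ has a non-empty closed set $\mathcal{P}$ of periodic points, and every other orbit is wandering with $\omega$- and $\alpha$-limit sets contained in $\mathcal{P}$. Hence $\mathcal{O}$ is a union of at most eight $F$-orbits, accumulating only on $\mathcal{P}$. If $U_1$ is isolated in $\mathcal{O}$, immediate neighbors $X, Y$ exist and I proceed as in Lemma \ref{lem:F_per_case}; otherwise ($U_1 \in \mathcal{P}$ must be a limit point of some $\orb(V_j)$) I would take approximating sequences $X_n, Y_n \to U_1$ in $\mathcal{O}$ and run the subsequent area argument on the shrinking triangles $\Delta(X_n, U_1, Y_n)$.

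For (ii), if $U_1 \in \mathcal{P}$ then $\orb(U_1)$ is finite and a return time $k > 0$ with $F^k(U_1) = U_1$ exists trivially. If $U_1 \notin \mathcal{P}$, rotational symmetry (Lemma \ref{lem:rot_sym}) gives $\orb(U_i) = R^{i-1}\orb(U_1)$, so the four orbits are either all equal, coincide in pairs, or are mutually disjoint. In the first two cases a return time $F^k(U_1) = U_i$ is automatic. In the last case no such $k$ exists, and the area argument must be replaced by a limiting version, applied to long finite segments of $\orb(U_1)$ whose iterates asymptotically approach the $\omega$-limit periodic orbit in $\mathcal{P}$.

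Given suitable $X, Y$ and $k$, the remainder of the proof proceeds exactly as in Lemma \ref{lem:F_per_case}: $F(\Delta(X, U_1, Y))$ is a quadrangle bent at the break line through $U_1$; by the choice of $X, Y$ and $k$ no further break lines are crossed between times $1$ and $k-1$, so $F^{k-1}$ acts affinely on that quadrangle; and the rotational-symmetry identity $\Delta(F^k X, F^k U_1, F^k Y) = \tilde R (\Delta(X, U_1, Y))$ combined with area preservation contradicts the strict set-containment produced by the bending. The main obstacle is the wandering subcase with four disjoint $\orb(U_i)$: exhibiting the area contradiction without a clean finite return time requires a careful limiting argument that uses the $R$-symmetry together with the asymptotic behaviour of wandering orbits supplied by the dichotomy.
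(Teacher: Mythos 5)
You have correctly isolated where periodicity enters the argument of Lemma \ref{lem:F_per_case}, but the decisive case --- $U_1$ wandering with the four orbits $\orb(U_i)$ mutually disjoint and disjoint from the $\orb(V_j)$ --- is exactly the one you leave unresolved, and the ``limiting version of the area argument'' you gesture at is not just a technicality: there is no return time, the triangles $\Delta(X_n,U_1,Y_n)$ degenerate, and no invariant region is available whose area could be compared before and after a full cycle. The paper does not use an area argument here at all. Instead it exploits the affine structure of $F^q$ near the limit periodic points: writing $z_k=F^k(U_1)$, one has $z_{nq}\to Q$ and $z_{-nq}\to Q'$ with $Q,Q'$ $q$-periodic, and since $F^q$ is affine on a neighbourhood of $Q$ on either side, the points $z_{nq}$ eventually lie along the contracting eigendirection at $Q$, forcing $\Gamma$ to contain an honest straight line segment near $Q$ (and likewise near $Q'$). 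One then takes the straight segment $\ell=\overline{z_{-(m+2)q}\,z_{-mq}}\subset\Gamma$ and pushes it forward: its image acquires a kink when it passes through $U_1$, and that kink can only disappear by being mapped onto some $V_j$; if it never is, $F^{nq}(\ell)$ retains a kink while converging into the straight portion of $\Gamma$ near $Q$, a contradiction. This ``kink propagation versus forced straightness'' idea is the missing ingredient in your proposal.

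A secondary issue concerns your treatment of the subcase where $U_1$ is periodic but $F\vert_\Gamma$ is not. There the set $\mathcal{O}$ including the $\orb(V_j)$ may be infinite and may accumulate on points of $\orb(U_1)$ (the $V_j$ may be wandering with limit sets meeting the periodic set), so ``nearest neighbours of $U_1$ in $\mathcal{O}$'' need not exist, and your fallback of shrinking triangles is again only a sketch. The paper sidesteps this by using only the finite set $\bigcup_i\orb(U_i)$, choosing $X,Y$ in it nearest to $V_1$ (not to $U_1$), and tracking the signed area between the arc $S\subset\Gamma$ from $X$ to $Y$ and its chord over one full period $q$: this area changes monotonically each of the at most four times the arc crosses a $V_j$ (all changes having the same sign by $R$-symmetry), yet $F^q(S)=S$, contradicting area preservation. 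You should adopt these two arguments; as it stands the proposal identifies the right obstacles but does not overcome the principal one.
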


\begin{proof}
As in the previous lemma, we give a proof for $U_1$, 
the other cases following by symmetry. We distinguish two cases:

\textbf{Case 1: $U_1$ non-periodic for $F$.}\newline
Let us write $z_k = F^k(U_1)$ for $k \in \Z$.
Since $\rho_\Gamma = \frac{p}{q} \in \Q$, there exist points $Q$ and $Q'$ in $\Gamma$, 
each periodic of period $q$, such that
$z_{nq} \to Q$ and $z_{-nq} \to Q'$ as $n \to \infty$. 
Note that $F^q$ is affine in a sufficiently small neighbourhood on either side of $Q$. 
Then for sufficiently large $N$, the points $z_{nq}$, $n > N$, lie on the straight line segment 
$\overline{z_{Nq} Q}$ (the contracting direction at $Q$). 
Hence for large $n$, $\Gamma$ contains the straight line segment $\overline{z_{nq} Q}$. 
Analogously, for large $n$, the straight line segment $\overline{Q' z_{-nq}}$ is contained in $\Gamma$. 

In particular, $\ell = \overline{z_{-(m+2)q} z_{-mq}}$ is contained in $\Gamma$ for large $m$. 
But $U_1 \in F^{(m+1)q}(\ell)$, so $F^{nq}(\ell)$ has a kink for large $n$ unless $z_{Nq} = V_j$ for some $N$ and $j$ 
(note that since $U_1$ is non-periodic, $U_i \notin \orb(U_1)$, $i \in \irange$). 
Since $F^{nq}(\ell)$ is near $Q$ for large $n$, it has to be straight, 
and it follows that $V_j \in \orb(U_1)$ for some $j$.

\textbf{Case 2: $U_1$ periodic for $F$.}\newline
In this case $F^q(U_1) = U_1$ and the argument is similar to the proof of Lemma \ref{lem:F_per_case}.

Assume for a contradiction that $V_j \notin \orb(U_1)$ for all $j$. 
By symmetry, this implies $V_j \notin \bigcup_i \orb(U_i)$. 
Pick $X,Y \in \bigcup_i \orb(U_i)$ nearest to $V_1$ from each side
and denote by $S$ the segment of $\Gamma$ between $X$ and $Y$.
Since the straight line segment $\overline{X Y}$ crosses the break line which contains $V_j$, 
its image $F(\overline{X Y})$ has a kink. 
Therefore, since $F$ is area preserving, the area between $F(S)$ and $\overline{F(X) F(Y)}$ is either 
greater or less than the area between $S$ and $\overline{X Y}$. For $0 \leq k < q$, 
the area between $F^k(S)$ and $\overline{F^k(X) F^k(Y)}$ is equal to the area between
 $F^{k+1}(S)$ and $\overline{F^{k+1}(X) F^{k+1}(Y)}$, unless $F^k(S)$ contains one of the $V_j$. 
Whenever $V_j \in F^k(S)$ for some $j$ and $k$ 
(which can happen at most four times for $0 \leq k < q$), 
this area decreases or increases. By symmetry, these up to four changes have the same form, 
so the area either always decreases or always increases.
Note on the other hand that $F^q(S) = S$, $F^q(X) = X$ and 
$F^q(Y) = Y$ (since $X$ and $Y$ are in the $q$-periodic orbit of $U_1$). 
So if we denote the region between $S$ and $\overline{X Y}$ by 
$\Omega$, $\area(F^q(\Omega)) \neq \area(\Omega)$, which contradicts 
the fact that $F$ is area preserving. This finishes the proof.
\end{proof}

Combinining the above lemmas, we get the following result. 

\begin{proposition}\label{prop:rot_num}
Let $\Gamma$ be a rotationally symmetric invariant circle disjoint from $\mcc$
with rotation number $\rho_\Gamma = \frac{p}{q} \in \Q$. 
Then for every $i\in\irange$, the $F$-orbit of $U_i$ contains some $V_j$, 
$j \in\irange$, and vice versa.
Moreover, every such orbit contains an equal number 
$n$ of the $U_i$ and $V_j$, which are traversed in alternating order. 
If $n \geq 2$, then any such orbit is periodic. 
\end{proposition}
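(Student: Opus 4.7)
The first assertion is an immediate restatement of the conclusions of Lemmas \ref{lem:F_per_case} and \ref{lem:F_nonper_case}: depending on whether $F\vert_\Gamma$ is periodic or not, exactly one of them applies, and both cover the $U$-to-$V$ and (by symmetry) $V$-to-$U$ directions.

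For the counting of the $U_i$ and $V_j$ in $\orb(U_1)$, the idea is to exploit the rotational symmetry from Lemma \ref{lem:rot_sym}. Because $R(U_i) = U_{i+1}$ and $R(V_j) = V_{j+1}$ (indices mod $4$), $R$-equivariance of $F$ yields $R(\orb(U_i)) = \orb(U_{i+1})$, so $\Z/4 = \langle R \rangle$ acts transitively on $\{\orb(U_1),\ldots,\orb(U_4)\}$. The kernel of this action must be one of three subgroups, giving the three configurations: (i)~all four orbits coincide; (ii)~$\orb(U_1) = \orb(U_3) \neq \orb(U_2) = \orb(U_4)$; (iii)~all four orbits are pairwise distinct. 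In each case, the $V_j$ lying in $\orb(U_1)$ can be pinned down via the simple principle that $V_j, V_{j+k} \in \orb(U_1)$ forces $\orb(U_1) = \orb(U_{1+k})$, since $V_{j+k} = R^k(V_j) \in R^k(\orb(U_1)) = \orb(U_{1+k})$. Combined with the first assertion, this yields $n_U = n_V = n$ with $n \in \{4, 2, 1\}$ in cases (i), (ii), (iii) respectively.

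The alternation statement then follows by inspection against the fixed cyclic order $U_1, V_1, U_2, V_2, U_3, V_3, U_4, V_4$ on $\Gamma$, which is already alternating: the $n_U + n_V$ special points of $\orb(U_1)$ form an $R^{4/n}$-invariant subset of these eight, and enumerating the possibilities (all eight in case (i); $\{U_1, U_3, V_j, V_{j+2}\}$ with $j \in \{1,2\}$ up to relabelling in case (ii); $\{U_1, V_j\}$ in case (iii)) shows that they always interleave on $\Gamma$.

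Finally, for the periodicity assertion: if $n \geq 2$, then $U_k \in \orb(U_1)$ for some $k \neq 1$, so $F^m(U_1) = U_k = R^{k-1}(U_1)$ for some $m \in \Z$. Applying $F \circ R = R \circ F$ inductively gives $F^{jm}(U_1) = R^{j(k-1)}(U_1)$ for every $j \in \Z$, and choosing $j$ so that $4 \mid j(k-1)$ returns to $U_1$, establishing periodicity of $U_1$ and hence of its entire orbit. The main subtlety I expect is in stating the alternation cleanly, but as indicated above it reduces, once the case analysis is in place, to a short finite verification using the explicit cyclic order of break-line intersections on $\Gamma$.
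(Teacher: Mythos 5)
Your proposal is correct, and it actually does more work than the paper, which offers no proof of this proposition beyond the phrase ``combining the above lemmas'': only your first paragraph (the two cases of Lemmas \ref{lem:F_per_case} and \ref{lem:F_nonper_case} covering the $U$-to-$V$ and, by reversibility/symmetry, $V$-to-$U$ directions) corresponds to what the paper actually argues. The counting via the $\Z/4$-action induced by $R(U_i)=U_{i+1}$, $R(V_j)=V_{j+1}$ and the equivariance $R(\orb(U_i))=\orb(U_{i+1})$, with the kernel subgroup giving $n\in\{4,2,1\}$, is sound, as is the periodicity argument from $F^m(U_1)=R^{k-1}(U_1)$ with $m\neq 0$. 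One point to tighten: your alternation argument is purely spatial (interleaving in the cyclic order $U_1,V_1,\dots,U_4,V_4$ on $\Gamma$), whereas ``traversed in alternating order'' is most naturally read temporally, i.e., the successive break-point visits along the orbit alternate between $U$'s and $V$'s --- this is what matters for the kink-cancellation mechanism of Remark \ref{rem:canc_orbits}. The temporal version follows from your setup with one extra observation: for $n\geq 2$ the orbit is a single periodic $F$-orbit on which $R^{4/n}$ acts as a commuting bijection, hence coincides with some power $F^{m_0}$ with $n\,m_0\equiv 0\pmod q$; this forces the $n$ points $U_i$ in the orbit to be equally spaced in time ($q/n$ apart), with exactly one $V_j$ strictly between consecutive ones, since the $V$'s form a single $R^{4/n}$-coset as well. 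With that addendum your proof is complete and is a legitimate, self-contained justification of claims the paper leaves to the reader.
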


\begin{remark} \label{rem:canc_orbits}
In \cite{Bullett1986}, Bullett coined the term \q{cancellation orbits} 
for these orbits of break points on an invariant circle, 
reflecting the insight that each \q{kink} introduced by the discontinuity of $d F$
at one such point needs to be \q{cancelled out} by an appropriate \q{reverse kink} at another 
$d F$-discontinuity point, if the invariant circle has rational rotation number.

However, cancellation orbits can also occur on invariant circles with $\rho \notin \Q$. 
In that case these orbits are not periodic, 
and each cancellation orbit would only contain 
one of the $U_i$ and one of the $V_j$. We will see 
examples for this in Section \ref{sec:spec_vals}.
\end{remark} 

We now show that cancellation orbits determine the behaviour of the whole map $F\vert_\Gamma$.

\begin{proposition}
Let $\Gamma$ be a rotationally symmetric invariant circle disjoint from $\mcc$.
Then $F\vert_\Gamma$ is periodic if and only if the $U_i$- and $V_i$-orbits are periodic. 
\end{proposition}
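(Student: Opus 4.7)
The easy direction is immediate: if $F\vert_\Gamma$ is periodic of period $q$, then every point of $\Gamma$, including all break points, has period dividing $q$. For the converse, suppose all $U_i$- and $V_j$-orbits are periodic; let $q$ be a common multiple of their periods, and set $\mathcal{B} = \bigcup_i \orb(U_i) \cup \bigcup_j \orb(V_j)$. Then $\mathcal{B}$ is a finite, $F$-invariant subset of $\Gamma$ on which $F^q$ acts as the identity. The plan is to show that $F^q = \id$ on each open arc $\alpha$ of $\Gamma \setminus \mathcal{B}$ between two consecutive points $P, P' \in \mathcal{B}$, from which $F^q = \id$ on all of $\Gamma$ follows immediately.

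The central step is to verify that $F^q\vert_{\bar\alpha}$ is the restriction of a single affine map $M\colon \R^2 \to \R^2$. If $x$ lies in $\inter(\alpha)$ and $F^k(x)$ were on some break line for some $0 \leq k < q$, then since $F^k(x) \in \Gamma$ and $\Gamma$ meets the break lines only at the points $U_i, V_j$, one would have $F^k(x) \in \mathcal{B}$, and hence $x \in \mathcal{B}$ by $F$-invariance of $\mathcal{B}$---contradicting $x \in \inter(\alpha)$. So the forward iterates of $\inter(\alpha)$ up to time $q-1$ each sit in a single piece of the partition, making $F^q$ affine on a neighborhood of $\inter(\alpha)$; continuity of $F^q$ extends this to $\bar\alpha$.

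The rest is linear algebra. $M$ is area-preserving (inherited from $F$) and fixes the two distinct endpoints $P, P'$ of $\alpha$, so it fixes the entire line $\ell$ through them. Its linear part has eigenvalue $1$ along $\ell$, and since $\det = 1$ the other eigenvalue is also $1$; consequently $M$ is either the identity or a nontrivial shear with axis $\ell$. Because $F^q$ preserves the $S^1$-order on $\Gamma$ and fixes $P, P'$, one has $F^q(\alpha) = \alpha$ as a set. A nontrivial shear, however, has no bounded setwise-invariant arc meeting its axis in two distinct points other than a subsegment of the axis itself---in axis-parallel coordinates the second coordinate is preserved, so any slice at a nonzero level would have to be invariant under a nonzero translation, which is impossible for a bounded set. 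Hence either $M = \id$ or $\alpha \subset \ell$, and in both cases $F^q\vert_\alpha = \id$.

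The hard part is the affine-reduction step in the second paragraph---checking that the orbit of a point in $\inter(\alpha)$ cannot secretly touch a break line during the first $q$ iterates. Everything afterwards is a short dichotomy argument, so once the affine reduction is in hand the conclusion follows quickly.
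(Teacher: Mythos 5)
Your proof is correct, but it takes a genuinely different route from the paper's. The paper argues by contradiction: assuming the break-point orbits are periodic but $F\vert_\Gamma$ is not, it invokes the classification of circle homeomorphisms with rational rotation number to produce a non-periodic point $P$ with $F^{nq}(P)\to Q$ for some periodic $Q$, observes that local affineness of $F^q$ near $Q$ forces $\Gamma$ to contain a straight segment in the contracting direction at $Q$, and then expands that segment under $F^{-q}$ until it must swallow some $U_i$ or $V_i$, contradicting its periodicity. You instead give a direct argument: taking $q$ to be a common period of all break-point orbits, you show $F^q$ agrees with a single area-preserving affine map on each closed arc between consecutive points of $\mathcal{B}$, and the linear-algebra dichotomy (identity versus nontrivial shear along the chord, with the shear excluded by invariance and boundedness of the arc) forces $F^q=\id$ there. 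Your affine-reduction step is sound: a connected arc whose iterates avoid the break lines stays in a single piece at each step, so the composition is one affine map, extended to the closed arc by continuity. What your approach buys is self-containedness --- it avoids the appeal to the Poincar\'e-type classification and the limiting/expansion argument, and it yields the sharper conclusion that $F^q=\id$ on $\Gamma$ for $q$ the least common multiple of the break-point periods; what the paper's version buys is economy, since it recycles verbatim the machinery already set up in the proof of Lemma \ref{lem:F_nonper_case}. One small presentational caveat: your claim that $F^q$ is affine on a two-dimensional \emph{neighbourhood} of $\inter(\alpha)$ is more than you need (and requires a short extra connectedness argument); affineness on the arc itself suffices and is what your break-line argument actually delivers.
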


\begin{proof}
Of course, if the rotation number $\rho_\Gamma$ is irrational, 
neither $F\vert_\Gamma$ nor the break point orbits
can be periodic, so we only need to consider rational rotation number. 

Further, if $F\vert_\Gamma$ is periodic, so are all cancellation orbits. 
For the converse, suppose for a contradiction that $U_i$ and $V_i$, $i \in \irange$, are periodic, 
but $F\vert_\Gamma$ is not.
We repeat an argument already familiar from the proof of Lemma \ref{lem:F_nonper_case}.

Pick any non-periodic point $P \in \Gamma$, then there exists $Q \in \Gamma$, 
such that $F^{nq}(P)\to Q$ as $n \to \infty$. 
Note that $F^q$ is affine in a sufficiently small neighbourhood on either side of $Q$. 
Then for $n > N$ sufficiently large, the points $F^{nq}(P)$ 
lie on a straight line segment from $F^{N q}(P)$ to $Q$ 
(the contracting direction at $Q$). So $\Gamma$ contains a straight 
line segment $S$ which expands under $F^{-q}$.
This expansion cannot continue indefinitely, so $F^{-mq}(I)$ 
must meet some $U_i$ (or $V_i$) for some $m$. 
But then this $U_i$ (or $V_i$) cannot be periodic, which contradicts the assumption. 
\end{proof}

\section{Special parameter values}\label{sec:spec_vals}

In this section, we will show that for a certain countable 
subset of parameter values $\theta \in (0,\frac{\pi}{4})$, 
the map $F = F_\theta$ has invariant circles of the form described in the previous section. 

\begin{theorem}\label{thm:spec_params}
 There exists a sequence of parameter angles $\theta_3, \theta_4, \ldots \in (0,\frac{\pi}{4})$, 
$\theta_K \to \frac{\pi}{4}$ as $K \to \infty$, such 
that for each $K$, $F_{\theta_K}$ has a countable collection 
of invariant circles $\{\Gamma_K^N : N \geq 0 \}$, each of rational 
rotation number $\rho(\Gamma_K^N) = 1/(4(K+N))$.
The curves $\Gamma_K^N$ consist of straight line segments, 
are rotationally symmetric and converge to the boundary
$\partial \rect$ as $N \to \infty$. 
\end{theorem}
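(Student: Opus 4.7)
The plan is to construct the invariant polygons $\Gamma_K^N$ explicitly, exploiting the $R$-symmetry of $F$ (Lemma~\ref{lem:rot_sym}) and the cancellation-orbit analysis of Section~\ref{sec:inv_circles}. For each $K \geq 3$, I would pick $\theta_K$ so that a specific algebraic resonance condition holds; this single choice of parameter will then support an entire nested family of polygonal invariant circles with the prescribed rational rotation numbers.

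To construct a single rotationally symmetric invariant polygon $\Gamma$ of rotation number $1/(4m)$, I would reduce the problem via $R$-equivariance to finding a single $F$-orbit that closes up consistently. By Proposition~\ref{prop:rot_num} the orbit must contain both $U_i$- and $V_j$-type break-line intersections, and by the rotational symmetry it suffices to locate one such vertex in a fundamental sector. I would parametrize a candidate $U_1 \in \overline{E_1 P_1}$ by its distance $r$ from $E_1$ and compute $F^{m}(U_1)$ as an explicit rational expression in $r$ and $\theta$, using the piecewise matrices $A$, $B$, $C$ from Section~\ref{sec:properties}. The cancellation condition together with the symmetric return condition $F^{m}(U_1) = R(U_1)$ (which forces a $4m$-periodic orbit given the $R$-invariance of $\Gamma$) becomes an algebraic system whose solution is expected to yield a unique pair $(r_K, \theta_K)$ for each $K \geq 3$; the bound $1/(4K) \leq \alpha/(2\pi) = 1/4 - \theta/\pi$ established in Section~\ref{sec:inv_circles} then forces $\theta_K \to \pi/4$ as $K \to \infty$.

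To build $\Gamma_K^N$ for $N > 0$ at the same parameter $\theta_K$, I would use a longer cancellation orbit of the same combinatorial family but with $N$ additional iterations inserted near $\partial \rect$. Since $F\vert_{\partial \rect} = \id$, the shears $A$ act almost trivially on points close to the boundary, so extra passes through the triangles $\mca_i$ can be accommodated in thinner boundary annuli and have the effect of lowering the rotation number from $1/(4K)$ to $1/(4(K+N))$. The resulting polygons would be $R$-invariant by construction, piecewise linear, have the required rotation number, and shrink toward $\partial \rect$ as $N \to \infty$.

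The main obstacle will be controlling the algebra at two levels. First, I need to show that the fundamental algebraic system for $(r_K, \theta_K)$ has an admissible solution inside the parameter window and that the resulting vertex set actually closes up into a simple closed polygon encircling $Z$ rather than self-intersecting or collapsing. Second, and more delicately, I need to verify that the \emph{same} $\theta_K$ supports the curves $\Gamma_K^N$ for every $N \geq 0$; for this I would exploit the $S$-reversibility of $F$ to pair the ``outgoing'' and ``incoming'' portions of each orbit symmetrically, and use the invariant quadratic form $Q_B$ of~\eqref{eq:quad_form_B} to keep the hyperbolic corner dynamics under control while the orbit spends $N$ extra iterations near a boundary edge.
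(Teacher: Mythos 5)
Your overall strategy---explicitly constructing periodic cancellation orbits in a fundamental sector, closing them up by $R$-symmetry, and tuning $\theta$ by a resonance condition---is the same as the paper's, but two steps of your outline have genuine gaps. First, your argument that $\theta_K \to \frac{\pi}{4}$ does not work: the bound $\rho \leq \frac{1}{4} - \frac{\theta}{\pi}$ applied to $\rho(\Gamma_K^0) = 1/(4K)$ only yields $\theta_K \leq \frac{\pi}{4}\left(1-\frac{1}{K}\right)$, an upper bound approaching $\frac{\pi}{4}$ \emph{from below}; it is perfectly consistent with $\theta_K$ staying bounded away from $\frac{\pi}{4}$. The paper instead gets the asymptotics from the hyperbolic piece $B$: the number of iterates a point of $\overline{E_1 P_4}$ spends in $\mcb_1$ is $K(t) = \log(x_2/x_1)/\log\lambda(t)$, where $\lambda(t)$ is the expanding eigenvalue of $B$; one checks that $K(t)$ is continuous and monotone in $t$ and tends to $\infty$ as $t \to \frac{1}{2}$ (i.e.\ $\theta \to \frac{\pi}{4}$, where $\lambda \to 1$), which simultaneously produces $\theta_K$ for every integer $K \geq 3$ and its convergence to $\frac{\pi}{4}$.

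Second, the heart of the theorem is that a \emph{single} parameter value $\theta_K$ supports all the circles $\Gamma_K^N$, $N \geq 0$; you correctly flag this as the delicate point but do not supply the idea that makes it work. The key fact is that $B$ is linear and fixes $E_1 = (0,0)$, hence maps rays through $E_1$ to rays through $E_1$; consequently the single condition $B^K(P_4) = P_1$ forces $F^K(\overline{E_1 P_4}) = \overline{E_1 P_1}$ as a map of the entire segment, with every point of $\overline{E_1 P_4}$ spending exactly $K$ iterates in $\mcb_1$. The freedom to insert $N$ extra iterates of the shear $A$ then comes for free: one picks the point $Y_N \in \overline{E_1 P_1}$ that takes exactly $N$ shear steps to reach $\overline{E_2 P_1}$ (an explicit elementary computation, with $Y_N \to E_1$ giving $\Gamma_K^N \to \partial\rect$) and takes $X_N$ to be its $F^K$-preimage on $\overline{E_1 P_4}$. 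Neither $S$-reversibility nor ``keeping the hyperbolic corner dynamics under control'' via $Q_B$ is the right tool for this. The form $Q_B$ does enter, but for a different purpose: it shows that the closing condition $F^{K+N}(X_N) = R(X_N)$ is \emph{automatic} ($Q_B(X_N) = Q_B(F^K(X_N))$ forces $x_1 = y_2$, and the shear $A$ preserves the $y$-coordinate), so you should not impose it as an extra equation in your algebraic system---doing so would make $\theta$ appear to depend on $N$ as well as $K$ and would obscure exactly the point you need, namely that one $\theta_K$ serves all $N$.
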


\begin{proof}
We will prove the result by explicitly finding periodic orbits 
in $\bigcup_i (\mca_i \cup \mcb_i)$, which
hit the break lines whenever passing from one of the pieces to another.
More precisely, we will show that for $K \geq 3$ and $N \geq 0$, 
there is a parameter value $\theta = \theta_K$ and a point $X_N \in \overline{E_1 P_4}$, 
such that $F^n(X_N) \in \mcb_1$ for $0 \leq n < K$, $F^K(X_N) \in \overline{E_1 P_1}$, 
$F^n(X_N) \in \mca_1$ for $K \leq n < K+N$, and $F^{N+K}(X_N) = R(X_N) \in \overline{E_2 P_1}$, 
where $R$ is the counterclockwise rotation 
by $\frac{\pi}{2}$ about the centre of the square, see Fig.\ref{fig:per_orbit}.
By symmetry this clearly gives a periodic cancellation orbit, 
and an invariant circle is then given by 
\[\Gamma_K^N = \bigcup_{i=0}^{4(K+N)} \overline{F^i(X_N) F^{i+1}(X_N)}.\]

\begin{figure}
\centering
\includegraphics[width=\textwidth]{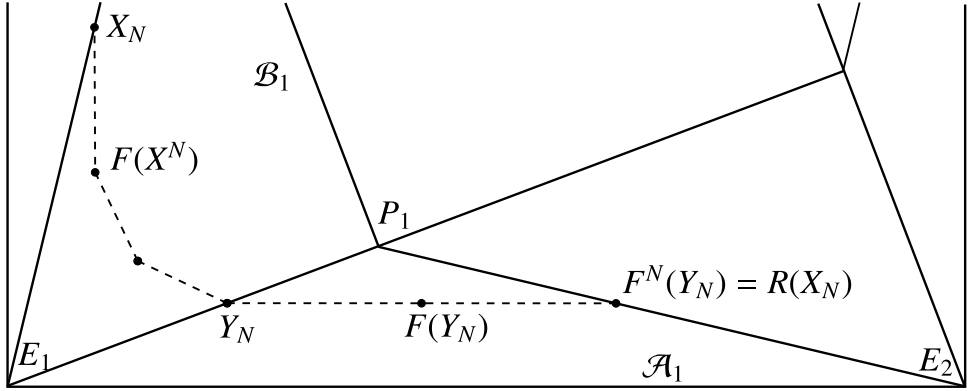}
\caption{Construction of (a part of) a periodic 
cancellation orbit in Theorem \ref{thm:spec_params} for $K = 3$, $N = 2$.
The points $X_N \in \overline{E_1 P_4}$ and $Y_N \in \overline{E_1 P_1}$ 
are chosen such that $F^K(X_N) = Y_N$ (Lemma \ref{lem:B_int_it})
and $F^N(Y_N) = R(X_N) \in \overline{E_2 P_1}$ (Lemma \ref{lem:A_int_it}).
The dots are the $F$-iterates of $X_N$, the dashed lines indicate 
the line segments making up (part of) the invariant circle $\Gamma_K^N$.}
\label{fig:per_orbit}
\end{figure}

We use the following two lemmas, whose proofs we leave to the appendix.

\begin{lemma}\label{lem:B_int_it}
 For every $K \in \N$, $K \geq 3$, there exists a parameter 
$\theta_K \in (0,\frac{\pi}{4})$, such that 
for $F = F_{\theta_K}$, $F^k(\overline{E_1 P_4}) \subset \mcb_1$ for $0 \leq k < K$, 
and $F^K(\overline{E_1 P_4}) = \overline{E_1 P_1} = \mcb_1 \cap \mca_1$. 
For $K \to \infty$, the angle $\theta_K$ tends to $\frac{\pi}{4}$.
\end{lemma}

\begin{lemma}\label{lem:A_int_it}
 For every $\theta \in (0,\frac{\pi}{4})$ and $N \geq 0$ there exists a point 
$Y_N \in \overline{E_1 P_1} = \mcb_1 \cap \mca_1$ such that
$F^n(Y_N) \in \mca_1$ for $0 \leq n < N$ and 
$F^N(Y_N) \in \overline{E_2 P_1} = \mca_1 \cap \mcb_2$.
For $N \to \infty$, the points $Y_N$ converge to $E_1$.
\end{lemma}

By Lemma \ref{lem:B_int_it}, for every $K \geq 3$ we can find 
$\theta_K$, such that $F^K$ maps $\overline{E_1 P_4}$ 
to $\overline{E_1 P_1}$ (in $\mcb_1$). 
Then by Lemma \ref{lem:A_int_it}, for any $N \geq 0$ there exists
$X_N \in \overline{E_1 P_4}$ such that 
$F^K(X_N) = Y_N \in \overline{E_1 P_1}$ and $F^{K+N}(X_N) \in \overline{E_2 P_1}$,
and it can be seen that each open line segment 
$(F^k(X_N), F^{k+1}(X_N))$, $k = 0,\ldots,K+N-1$, 
lies in the interior of either $\mcb_1$ ($0 \leq k < K$) 
or $\mca_1$ ($K \leq k < K+N$), see Fig.\ref{fig:per_orbit}.

Further, $B$ preserves the quadratic form $Q_B(x,y) = t (x^2 + y^2) - x y$.
With $X_N = (x_1,y_1) \in \overline{E_1 P_4}$ 
and $F^K(X_N) = (x_2, y_2) \in \overline{E_1 P_1}$ as above, 
a simple calculation shows that $Q_B(X_N) = Q_B(F^K(X_N))$ implies that $x_1 = y_2$. 
Then, with $F^{K+N}(X_N) = (x_3,y_3) \in \overline{E_2 P_1}$, one has $y_3 = y_2$, as
$A$ preserves the y-coordinate. 
We have that $R(\overline{E_1 P_4}) = \overline{E_2 P_1}$, and since $x_1 = y_3$, 
it follows that $F^{K+N}(X_N) = R(X_N)$. 
Rotational symmetry (Lemma \ref{lem:rot_sym}) then implies
that $X_N$ is a periodic point of period $4(K+N)$ for $F$, 
and the line segments connecting the successive $F$-iterates of $X_N$
form a rotationally symmetric invariant circle for $F$. 

Hence we obtain, for each $K \geq 3$, a parameter 
$\theta_K$ such that $F_{\theta_K}$ has a sequence
of rotationally invariant circles $\Gamma_K^N$, 
$N \geq 0$, each consisting of straight line segments.
Moreover, Lemma \ref{lem:A_int_it} implies that
$\Gamma_K^N \to \partial \rect$ (in the Hausdorff metric)
and $\rho_{\Gamma_K^N} = 1/(4(K+N)) \to 0$ as $N \to \infty$.
\end{proof}

In the proof of the theorem, for a sequence 
of special parameter values $\theta_K$
we constructed periodic orbits hitting the 
break lines and invariant circles made up 
of line segments connecting the points of the periodic orbits.
A closer look at the behaviour of $F$ 
\emph{between} any two consecutive invariant 
circles in this construction reveals that 
in fact the dynamics of $F$ in these regions 
is very simple for these parameter values. 

To see this, let $\theta = \theta_K$, $K \geq 3$, $N \geq 0$,  
and take $X \in \Gamma_K^N \cap \overline{E_1 P_1}$, 
$Y \in \Gamma_K^{N+1} \cap \overline{E_1 P_1}$. 
Then $X$ and $Y$ are periodic cancellation orbits of 
periods $4(K+N)$ and $4(K+N+1)$ on the respective invariant circles. 
Let $\mathcal{R}$ be the quadrangle with vertices $X, Y, F(Y), F(X)$. 
Then $F^{K+N}(X) = R(X)$ and $F^{K+N+1}(Y) = R(Y)$ 
lie in $\overline{E_2 P_2} = R(\overline{E_2 P_2})$, 
and it is easy to see that $F^{K+N}$ maps the triangle 
$D_1 = \Delta(X,F(Y),F(X))$ affinely to the triangle
$R(\Delta(X,Y,F(X)))$ and $F^{K+N+1}$ maps $D_2 = \Delta(X,Y,F(Y))$ 
affinely to $R(\Delta(Y,F(Y),F(X)))$.
This gives a piecewise affine map 
$\Phi \colon D_1 \cup D_2 = \mathcal{R} \to R(\mathcal{R})$ 
of the simple form shown in Fig.\ref{fig:returnmap}. 
By symmetry, the first return map of $F$ to the quadrangle $\mathcal{R}$ is then the fourth 
iterate of such map, and is easily seen to preserve the y-coordinate.

\begin{figure}
\centering
\includegraphics[width=0.6\textwidth]{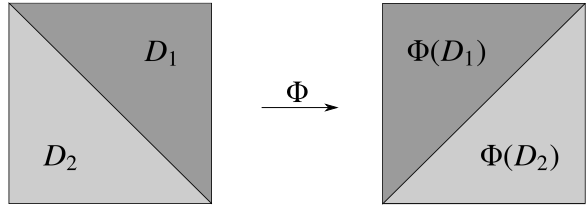}
\caption{The form of the map $\Phi \colon \mathcal{R} \to R(\mathcal{R})$, where $\mathcal{R}$
is the quadrangle formed by two consecutive cancellation orbit points on 
each of two adjacent invariant circles $\Gamma_K^N$, $\Gamma_K^{N+1}$. The first return map 
of $F$ to $\mathcal{R}$ is (by symmetry) the fourth iterate of $\Phi$.}
\label{fig:returnmap}
\end{figure}

It follows immediately that in fact all points 
in the annulus between $\Gamma_K^N$ and $\Gamma_K^{N+1}$
lie on invariant circles. These invariant circles are of the same form as the $\Gamma_K^N$, 
that is, they consist of line segments parallel to those explicitly 
constructed in the proof of Theorem \ref{thm:spec_params}. 
The rotation number of the invariant circle
through the point $W \in \overline{E_1 P_1}$ changes continuously (in fact, linearly) 
from $1/(4(K+N))$ to $1 / (4(K+N+1))$, as $W$ goes from $X$ to $Y$.

These invariant circles take on both rational and irrational rotation numbers, 
and their intersections with the break lines 
are not necessarily periodic as those of the $\Gamma_K^N$,
but their orbits still form cancellation orbits, 
since $F^K(\overline{E_1 P_4}) = \overline{E_1 P_1}$ (see Remark \ref{rem:canc_orbits}).
We get the following corollary.

\begin{corollary}\label{cor:inv_circ_foliation}
 For $\theta = \theta_K$, $K \geq 3$, as in 
Theorem \ref{thm:spec_params}, the annulus between $\partial \rect$
and $\Gamma_K^0$ (the invariant circle containing $P_i$ and $P_i'$, $i \in \irange$) is completely foliated 
by rotationally symmetric invariant circles with rotation numbers 
continuously and monotonically varying from $0$ on $\partial \rect$ to  $1/(4K)$ on $\Gamma_K^0$.
\end{corollary}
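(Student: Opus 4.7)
The plan is to assemble the corollary from the per-annulus foliation already derived in the paragraphs immediately preceding the statement. Denote by $\mathcal{A}$ the closed annulus bounded by $\partial\rect$ and $\Gamma_K^0$. By the monotonicity lemma (lower rotation number $\Leftrightarrow$ closer to $\partial\rect$) together with the Hausdorff convergence $\Gamma_K^N\to\partial\rect$ from Theorem \ref{thm:spec_params}, the circles $\Gamma_K^N$, $N\geq 0$, are strictly nested inside $\Gamma_K^0$ and accumulate on $\partial\rect$. This yields the decomposition
\[
\mathcal{A} = \partial\rect \cup \bigcup_{N\geq 0}\Gamma_K^N \cup \bigcup_{N\geq 0} A_N,
\]
where $A_N$ is the open annulus bounded by $\Gamma_K^N$ and $\Gamma_K^{N+1}$.

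Next, I would invoke the return-map analysis. From the discussion of $\Phi\colon \mathcal{R}\to R(\mathcal{R})$ preceding the corollary, every $A_N$ is completely foliated by rotationally symmetric invariant circles, each consisting of line segments parallel to those of the bounding $\Gamma_K^N$ and $\Gamma_K^{N+1}$. Since the fourth iterate of $\Phi$ is the first return map of $F$ to $\mathcal{R}$ and preserves the $y$-coordinate, the foliation inside $A_N\cap\mathcal{R}$ is precisely by horizontal segments; the rotation number along the invariant circle through a point $W\in A_N\cap\overline{E_1 P_1}$ therefore depends linearly, hence continuously and strictly monotonically, on the $y$-coordinate of $W$, sweeping out all values in $\bigl[1/(4(K{+}N{+}1)),\, 1/(4(K{+}N))\bigr]$ as $W$ traverses this boundary segment. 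Gluing these foliations to the circles $\Gamma_K^N$, whose rotation numbers $1/(4(K{+}N))$ match the endpoint values of the foliations of $A_{N-1}$ (from outside) and $A_N$ (from inside), I obtain a foliation of $\mathcal{A}\setminus\partial\rect$ with a continuous, strictly monotone rotation number function, parametrised for instance by the $y$-coordinate along $\overline{E_1 P_1}$.

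Finally, the outer boundary $\partial\rect$ is itself an invariant \q{circle} with rotation number $0$, since $F\vert_{\partial\rect}=\id$, and the rotation numbers $1/(4(K{+}N))\to 0$ as $N\to\infty$, so the rotation-number function extends continuously to $\partial\rect$ with value $0$. Combined with the value $1/(4K)$ on $\Gamma_K^0$ and the established monotonicity, this gives the full range $[0,1/(4K)]$.

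The main obstacle is the gluing step: one has to confirm that the rotation number varies continuously \emph{across} each $\Gamma_K^N$, and that the invariant curves on either side really limit onto $\Gamma_K^N$ itself (rather than, say, a nearby circle of the same rotation number). Both points follow from the explicit description of nearby invariant circles in $A_{N-1}$ and $A_N$ as unions of line segments parallel to those of $\Gamma_K^N$, with endpoints moving continuously along $\overline{E_1 P_1}$ as the $y$-coordinate varies; the common limit at the matching rotation number $1/(4(K{+}N))$ is then forced. Once this is in place, the remaining assertions of the corollary are immediate bookkeeping on top of the per-annulus analysis already carried out.
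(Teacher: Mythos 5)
Your proposal is correct and follows essentially the same route as the paper: the corollary is derived there directly from the preceding return-map discussion (the map $\Phi$ on the quadrangle $\mathcal{R}$ whose fourth iterate preserves the $y$-coordinate), which gives the foliation of each annulus $A_N$ between $\Gamma_K^N$ and $\Gamma_K^{N+1}$ by parallel-segment invariant circles with linearly varying rotation number, and the global statement is obtained by exactly the gluing and limiting bookkeeping ($\Gamma_K^N\to\partial\rect$, $1/(4(K+N))\to 0$) that you carry out. Your additional care about continuity of the rotation number across each $\Gamma_K^N$ is a reasonable elaboration of what the paper leaves implicit.
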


\begin{remark}\label{rem:firstparam}
 One can check that $K = 3$ in the proof of Theorem \ref{thm:spec_params} is obtained by 
setting $\theta = \frac{\pi}{8}$, corresponding to $t = (2-\sqrt 2) /4$ and $s = \sqrt 2 /4$.
This is the case when $C$ is the rotation by $\frac{\pi}{4}$ on $\mcc$.
For $K \geq 4$, exact values for $\theta$ are less easy to determine explicitly. 

In this special case $\theta = \frac{\pi}{8}$, the map $F$ turns out to be of a 
very simple form, allowing
a complete description of the dynamics on all of $\rect$. 
By a similar argument applied to the region inside 
$\Gamma_3^0$ (containing the rotational part $\mcc$), the statement of 
Corollary \ref{cor:inv_circ_foliation} can then be strengthened, 
stating that in this case
the whole space $\rect$ is foliated by invariant circles, 
with rotation numbers varying continuously 
and monotonically from $0$ on $\partial \rect$ to 
$\frac{1}{8}$ on the invariant octagon $\mathcal{O}$ inscribed 
in $\mcc$. The invariant circles between $\Gamma_3^0$ and 
$\mathcal{O}$ each consist of twelve straight line segments, 
parallel to the twelve segments of $\Gamma_3^0$, 
%of which four decrease in length to zero, as the circles 
%approach $\partial \mathcal{O}$, 
see Fig.\ref{fig:firstparam}.  
\end{remark}

\begin{figure}
\centering
\includegraphics[width=0.5\textwidth]{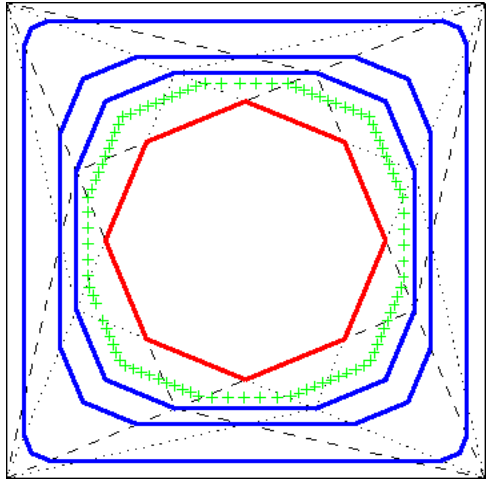}%,trim = 0 200 0 200
\caption{The case $\theta = \theta_3 = \frac{\pi}{8}$ from Remark \ref{rem:firstparam}. 
The solid piecewise straight line circles are 
the invariant octagon $\mathcal{O}$ inscribed in $C$, as well as the 
invariant circles $\Gamma_3^0$, $\Gamma_3^1$ and $\Gamma_3^{10}$. In the annulus 
between $\mathcal{O}$ and $\Gamma_3^0$, the first 100 iterates of an orbit are depicted, 
all lying on an invariant circle which consists of twelve straight line segments.}
\label{fig:firstparam}
\end{figure}

\section{General parameter values and discussion}\label{sec:gen}

%To end this section, let us briefly mention more general \q{higher order} cancellation orbits and 
%invariant circles.
For other parameter values than the ones considered in the previous section, 
we generally cannot prove the existence of any invariant circles. 
Let us briefly mention more general \q{higher order} cancellation orbits and 
invariant circles. 

Recall that in Theorem \ref{thm:spec_params}, we constructed a family of invariant circles 
consisting of line segments connecting successive orbit points of periodic cancellation orbits.
The chosen cancellation orbits were of the simplest possible kind, 
where a point on any break line is mapped by a certain number of iterations to the next possible break line.

%It is conceptionally not much more difficult but certainly more tedious
It is possible to construct invariant circles from one or several
more complicated periodic cancellation orbits\footnote{Due to the rotational symmetry of the system,
a periodic cancellation orbit could contain one, two, or four pairs of break points. 
In the first two cases, the union of all rotated 
copies of the cancellation orbit would need to be considered, 
to form an invariant circle by adding straight 
line segments between successive points in this union.}
(for other values of $\theta$ than the ones in Theorem \ref{thm:spec_params}). 
On such periodic cancellation orbit, 
the iterates of a point on a break line would cross several break lines before 
hitting one. 
The resulting invariant circle would still consist of straight line segments, but 
in such a way that a given segment and its image under $F$ are not adjacent on the circle.

Doing this \q{higher order} construction is conceptionally not much more difficult, 
but certainly more tedious than the \q{first order} construction of Theorem \ref{thm:spec_params}. 
The effort to construct even just a single higher order periodic cancellation orbit 
seems futile, unless a more general scheme to construct all or many of them at once can be found. 

It is unclear whether invariant circles of this type exist for all $\theta$, and whether 
for typical $\theta$ there exist piecewise line segment invariant circles 
(of rational or irrational rotation number) containing non-periodic cancellation orbits, 
as the ones seen in Corollary \ref{cor:inv_circ_foliation}. 

\begin{question}
For which parameter values $\theta \in (0,\frac{\pi}{4})$ does $F_\theta$ have invariant 
circles with periodic cancellation orbits (and, therefore, rational rotation number)?
For which $\theta$ are there piecewise line segment 
invariant circles with non-periodic cancellation orbits?
\end{question}

\begin{figure}
\centering
\includegraphics[width=\textwidth]{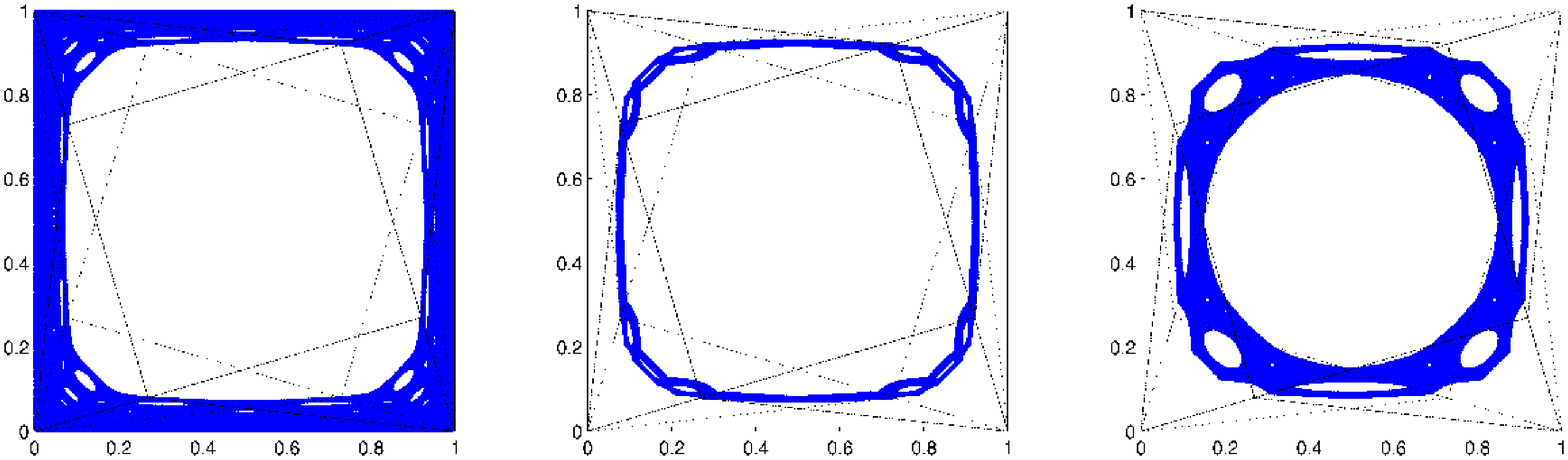}%,trim = 120 150 100 120,clip=true
\caption{The first $5\cdot10^5$ iterates of three initial points for $\theta = \frac{\pi}{11}$.
Each of the orbits seems to be confined to an invariant annulus and fill this annulus densely, except for a 
number of elliptic islands.}
\label{fig:erg_reg}
%\label{fig:erg_reg}
%\end{figure}
%\begin{figure}[h]
%\centering
\includegraphics[width=\textwidth,trim = 0 0 0 -30]{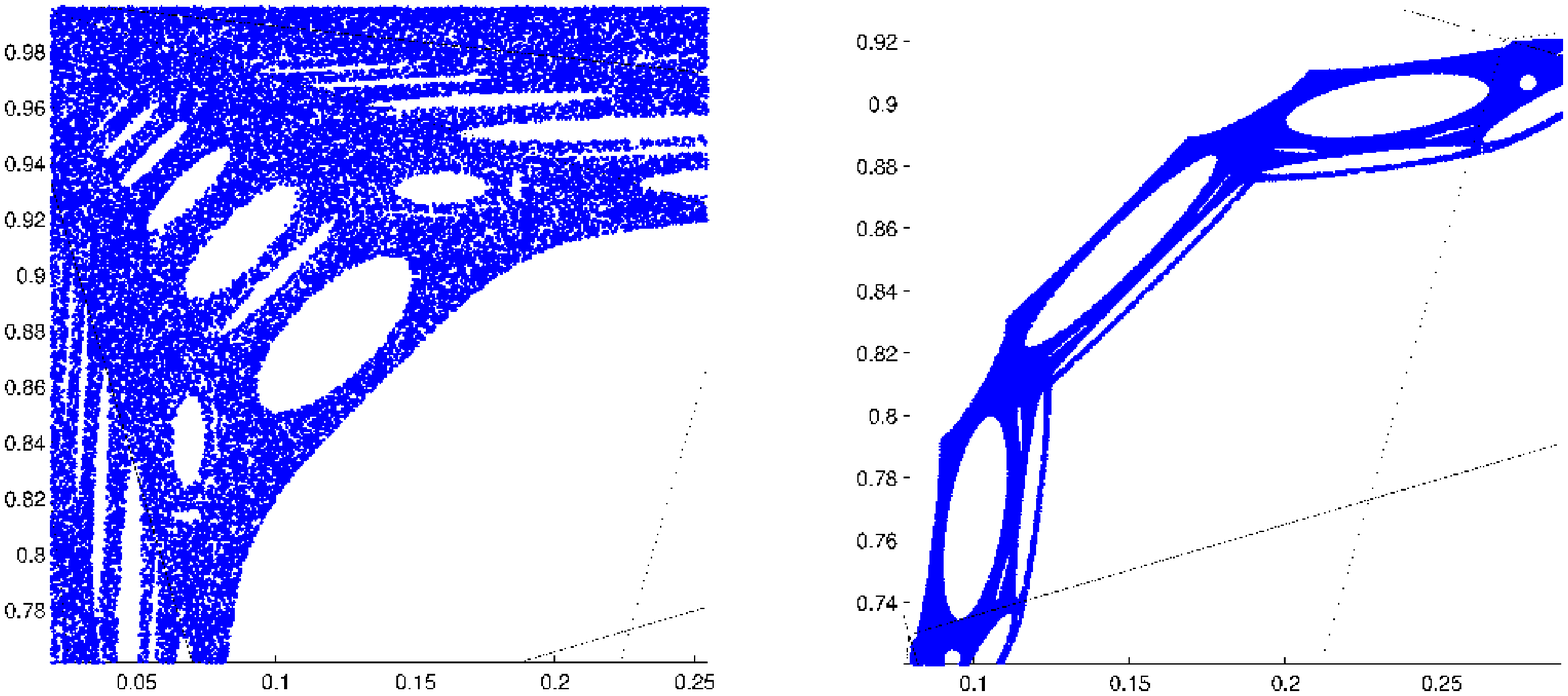}%,trim = 150 100 170 100,clip=true
\caption{Zoom-in of the first two orbits from Fig.\ref{fig:erg_reg}, 
showing that the invariant annuli contain
large numbers of smaller and smaller elliptic island chains. 
Further zoom-in (and longer orbits) reveal increasingly 
intricate patterns of such quasi-periodic elliptic regions.}
\label{fig:erg_reg_zoom}
\end{figure}

Moreover, we can at this point not rule out the existence of 
invariant circles (outside of $\mcc$) of an entirely different kind, 
not consisting of straight line segments. Indeed, some numerical experiments seem to indicate 
the occurence of invariant regions with smooth boundaries, but it is unclear whether this is 
due to the limited resolution (see, for example, left picture in Fig.\ref{fig:erg_reg} and Fig.\ref{fig:erg_reg_zoom}).

By Proposition \ref{prop:rot_num}, the intersections of any invariant circle of rational rotation number
with the break lines form cancellation orbits. For irrational rotation number, 
this need not be the case. Note, however, that under irrational rotation, 
the \q{kink} introduced to an invariant circle at 
its intersection with a break line propagates densely 
to the entire circle, unless it is cancelled by eventually being mapped 
to another break line intersection. 
Hence, an invariant circle  of irrational rotation number would either 
contain a cancellation orbit for each pair of break lines, 
or otherwise would be geometrically
complicated, namely nowhere differentiable.

\begin{question}
 Are there invariant circles for $F$ (outside of $\mcc$) which are not comprised
of a finite number of straight line segments? 
Are there invariant circles whose intersections
with the break lines do not form cancellation orbits?
\end{question}

%\medskip

As for the dynamics of $F$ between invariant circles, 
we can only point to numerical 
evidence that annuli between consecutive invariant circles 
form ergodic components interspersed with 
\q{elliptic islands}. An elliptic island consists of a periodic point of, 
say, period $p$, surrounded by a family 
of ellipses which are invariant under $F^p$, such that $F^p$ 
acts as an irrational rotation on each of these ellipses; this is referred to 
as \q{quasi-periodic} behaviour.
In the case when $F^p$ is a rational rotation, these quasi-periodic 
invariant circles would take on the shape of polygons, consisting entirely of $p$-periodic points.
The rest of the annuli seems to be filled with what is often 
referred to as \q{stochastic sea}, that is, the dynamics seems to be ergodic 
and typical orbits seem to fill these regions densely.

\begin{figure}
\centering
\includegraphics[width=\textwidth]{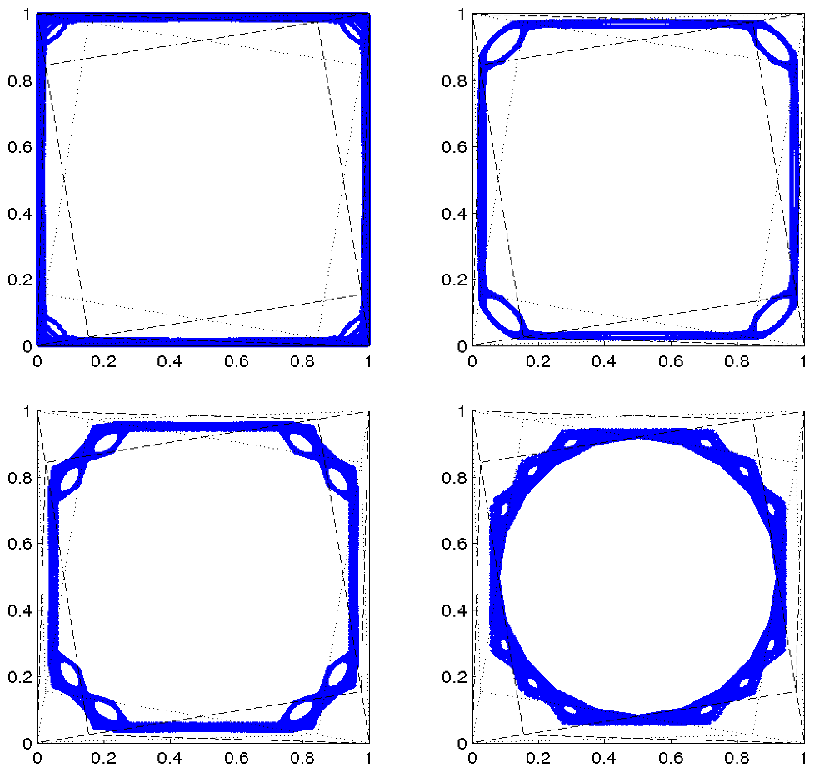}
\caption{The first $10^5$ iterates of four initial points for $\theta = \frac{\pi}{20}$.
Each seems to be a dense orbit in an invariant annulus. The four annuli (without a number of elliptic islands and 
the invariant periodic regular 20-gon inscribed in $\mcc$) seem to be the ergodic components of $F$.}
\label{fig:45pi}
\end{figure}

\begin{figure}
\centering
\includegraphics[width=\textwidth]{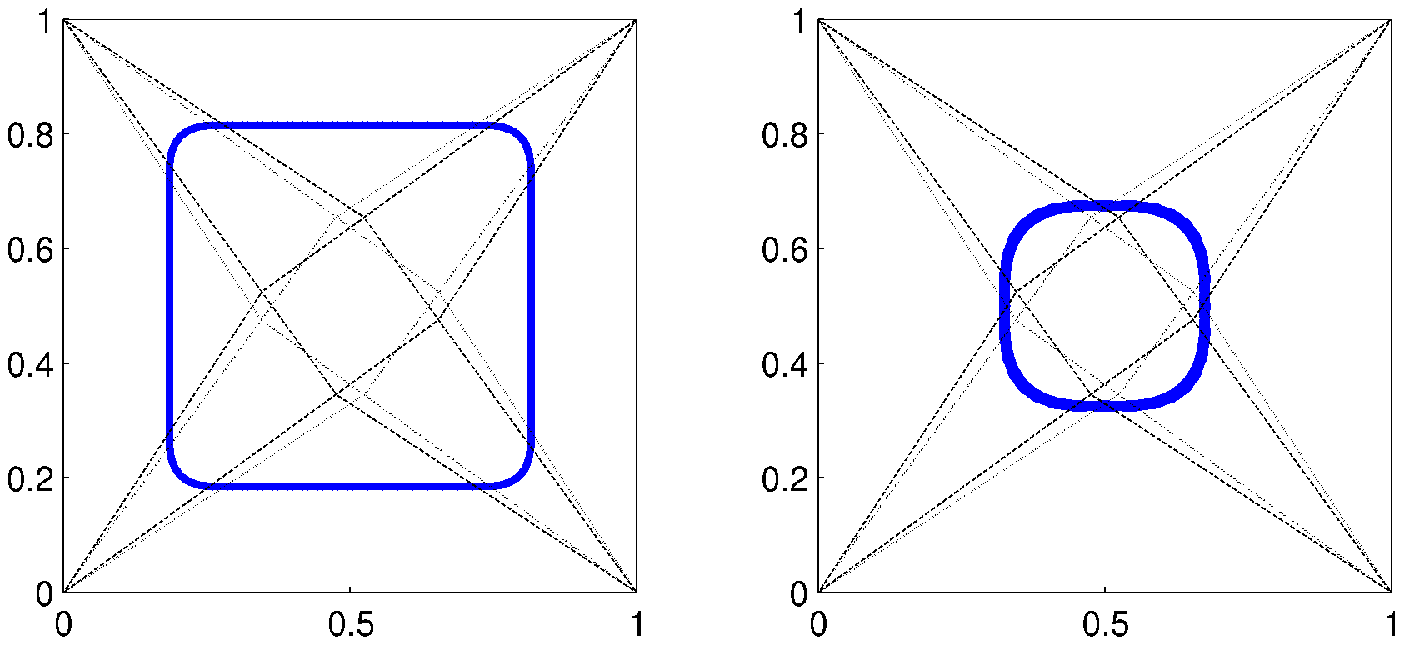}
\caption{The first $10^5$ iterates of two initial points for $\theta = \frac{\pi}{5}$.
Each of the orbits seems to densely fill a thin invariant annulus. The rectangle seems 
to be partitioned into finitely many such invariant annuli 
(which get thinner and more numerous as $\theta \to 0$).}
\label{fig:pi_5}
\includegraphics[width=\textwidth,trim = 0 0 0 -30]{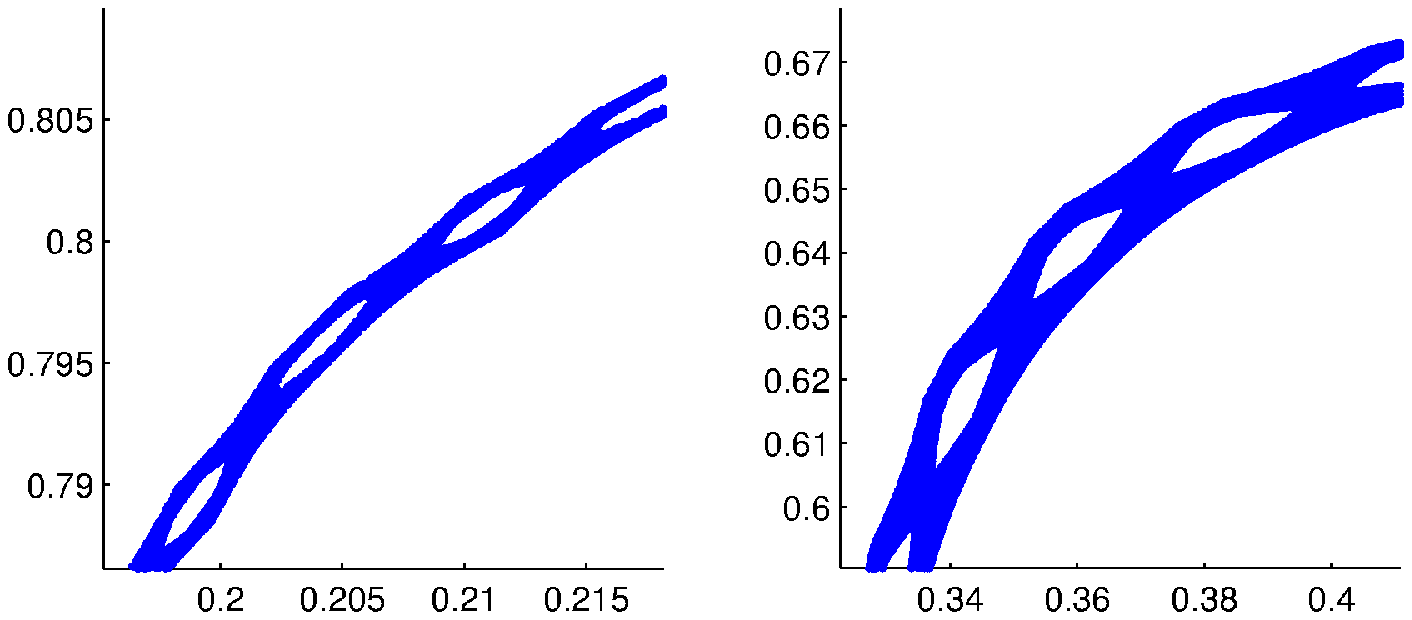}
\caption{Zoom-in of the orbits from Fig.\ref{fig:pi_5}. The thin invariant annuli 
contain periodic island chains, which under even stronger magnification 
could be seen to be surrounded by further, finer, islands of quasi-periodic motion.}
\label{fig:pi_5zoom}
\end{figure}

As in many similar systems (e.g., perturbations of the standard map), 
numerical observations seem to indicate that these \q{ergodic regions}
have positive Lebesgue measure (see Figures \ref{fig:erg_reg}-\ref{fig:pi_5zoom}).
This is related to questions surrounding the famous 
\q{quasi-ergodic hypothesis}, going back to Ehrenfest \cite{Ehrenfest} and Birkhoff \cite{Birkhoff1932}, 
conjecturing that typical Hamiltonian dynamical systems have dense orbits 
on typical energy surfaces (see also \cite{Herman1998}).

For a piecewise linear version of the standard map, Bullett \cite{Bullett1986} established 
a number of results on cancellation orbits and invariant circles of both rational 
and irrational rotation numbers. Wojtkowski \cite{Wojtkowski1981,Wojtkowski2008} 
showed that the map is almost hyperbolic and deduced that it has ergodic components of positive
Lebesgue measure. Almost hyperbolicity here means the almost everywhere 
existence of invariant foliations of the space (or an invariant subset) 
by transversal local contracting and expanding fibres. Equivalently, 
this can be expressed through the existence of invariant contracting and expanding cone fields. 
By classical theory (see, for example, \cite{Sinai}) almost hyperbolicity
implies certain mixing properties of the map, and,
in particular, the existence of an at most countable collection of ergodic components of positive 
Lebesgue measure. 

A similar kind of cone construction as in \cite{Wojtkowski1981,Wojtkowski2008}
seems to be more difficult for the map studied in this paper. 
One important difference is that the piecewise linear standard map in these 
papers is a twist map, which is not strictly the case for the map $F$ studied here (see \cite{LeCalvez2000}
and references therein for an overview over the numerous classical results for twist maps, 
mostly based on Birkhoff and Aubry-Mather theory).
The additional property that $F$ equals the identity on the boundary of the square also sets 
it apart. In particular, the motion of points under $F$ close to the boundary 
can be arbitrarily slow, that is, take arbitrarily many iterations to pass through the piece 
$\mca_i$ (while the number of iterations for a passage through $\mcb_i$ remains bounded). 
This seems to make it more difficult to explicitly construct an invariant contracting 
or expanding cone field, as was done for the piecewise linear standard map. 
Moreover, such an invariant cone field construction can not be carried out 
uniformly for all $\theta$. 
In fact, as can be seen from Corollary \ref{cor:inv_circ_foliation}, there are parameter values 
$\theta_K$, $K=3,4,\ldots$,
for which almost hyperbolicity cannot hold on large parts of $\rect$, 
as the dynamics is completely integrable on the annulus between the invariant
circle $\Gamma_K^0$ and $\partial \rect$ (and even on all of $\rect$ for 
$\theta = \theta_3 = \frac{\pi}{8}$, see Remark \ref{rem:firstparam}).

We are led to leave the following as a question.

\begin{question}
 Are there parameter values $\theta$ for which the map $F_\theta$ is almost hyperbolic
on some invariant subset of $\rect$? How large is the set of parameters $\theta$ for which this is the case?
\end{question}

While it does not seem likely that almost hyperbolicity can be shown for almost all $\theta$, 
numerical evidence suggests that for typical $\theta$, the map $F_\theta$ has a finite number
of ergodic components of positive Lebesgue measure, in which typical points have dense orbits.

\begin{conjecture}
 For Lebesgue almost all $\theta \in (0,\frac{\pi}{4})$, there is a finite number of $F$-invariant sets 
 $A_1,\ldots,A_m$, each of positive Lebesgue measure, such that $F\vert_{A_i} \colon A_i \to A_i$ is 
ergodic for every $i=1,\ldots,m$. Each $A_i$ is a topological annulus
with a certain number of elliptic islands removed from it, 
and, together with the elliptic islands
and the invariant disk inscribed in $\mcc$, the $A_i$ form a partition of $\rect$.
\end{conjecture}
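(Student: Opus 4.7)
The plan is to adapt Wojtkowski's programme for the piecewise linear standard map \cite{Wojtkowski1981,Wojtkowski2008} to the present setting, in three stages: (i) show that for Lebesgue-almost every $\theta$ the square decomposes into finitely many $F$-invariant annuli bounded by invariant circles, (ii) construct an invariant hyperbolic cone field on each annulus away from a finite union of elliptic islands, and (iii) invoke the classical ergodic theory of area-preserving systems carrying invariant cone fields to extract finitely many ergodic components of positive Lebesgue measure.

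For step (i), I would first exclude the special parameter values $\theta_K$ of Theorem \ref{thm:spec_params}, together with the \q{higher order} analogues alluded to in Section \ref{sec:gen}. Each such value is cut out by a countable family of algebraic conditions on $\theta$ (requiring that some iterate of $F_\theta$ map one chosen segment of a break line onto another), so the full set of these parameters has Lebesgue measure zero. Outside this null set, Proposition \ref{prop:rot_num} acts as a rigidity statement forcing any rotationally symmetric invariant circle disjoint from $\mcc$ to carry a cancellation orbit through the points $U_i$ and $V_i$; a transversality argument in $\theta$ should then reduce the number of such surviving invariant circles to be finite, yielding the claimed partition of $\rect$ into the invariant disk inscribed in $\mcc$, finitely many elliptic islands around $C$-periodic orbits, and finitely many topological annuli.

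For step (ii), the linear building blocks are the hyperbolic map $B$ on $\mcb_i$, preserving the quadratic form $Q_B$ of \eqref{eq:quad_form_B}; the shear $A$ on $\mca_i$, fixing the horizontal direction; and the rotation $C$ on $\mcc$. The natural candidate is a cone field $\mathcal{K}$ aligned with the expanding eigendirection of $B$ on $\bigcup_i \mcb_i$, extended across $\bigcup_i \mca_i$ using the invariant horizontal direction of $A$. Since $C$ preserves no proper cone, one cannot extend $\mathcal{K}$ into $\mcc$; the remedy is to work not with $F$ itself but with the induced first-return map to $\bigcup_i (\mca_i \cup \mcb_i)$, and to show that $\mathcal{K}$ is strictly invariant and strictly expanded by this return map on a full-measure subset of each annulus from (i). The rotational symmetry of Lemma \ref{lem:rot_sym} should make it sufficient to carry out the bookkeeping at break-line crossings in a single sector, say near $\mca_1 \cup \mcb_1$.

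Stage (iii) is then standard: an invariant expanding cone field on a set of positive measure yields via the Hopf--Sinai argument (see \cite{Sinai}) at most countably many ergodic components of positive Lebesgue measure, and finiteness follows from uniform lower bounds on the size of local unstable manifolds combined with the topological control from (i). I expect the main obstacle to be step (ii), for two reasons specific to $F$: first, the unbounded dwell time of orbits in $\mca_i$ near $\partial \rect$, where $F$ is the identity, destroys uniform hyperbolicity, so one has to work with an accelerated induced map counting only returns to a compact part of $\bigcup_i \mcb_i$ and then prove the induced scheme captures a full-measure set; second, the twist condition fails globally because $C$ acts as a rigid rotation on all of $\mcc$, which forces a parameter-dependent, non-constant choice of cone field that must also skirt the elliptic islands generated by periodic orbits of $C$. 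Controlling how these islands nest and accumulate (cf.\ Figures \ref{fig:erg_reg}--\ref{fig:pi_5zoom}) while keeping their total Lebesgue measure bounded below by the measure of the annuli is, in my view, where the argument will demand genuinely new ideas beyond what is available in \cite{Wojtkowski1981,Wojtkowski2008,Bullett1986}.
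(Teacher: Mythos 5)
The statement you are trying to prove is not a theorem of the paper but a \emph{conjecture}, explicitly left open; the paper offers no proof, only numerical evidence and a discussion of why the obvious strategies fail. Your proposal is a research programme rather than a proof, and you concede as much when you write that step (ii) ``will demand genuinely new ideas.'' A proof attempt that ends by identifying where new ideas are needed has not closed the argument, so there is a genuine gap --- in fact several.

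Concretely: your step (i) presupposes that for almost every $\theta$ the square decomposes into finitely many invariant annuli bounded by invariant circles, but the existence of \emph{any} invariant circle outside $\mcc$ for general $\theta$ is itself posed as an open Question in Section \ref{sec:gen}; Proposition \ref{prop:rot_num} and Theorem \ref{thm:spec_params} only constrain or construct such circles for a countable set of special parameters, and the ``transversality argument in $\theta$'' you invoke to get finiteness is not supplied. Your step (ii) is exactly the obstruction the paper identifies: the unbounded dwell time in the shear pieces $\mca_i$ near $\partial\rect$ (where $F=\id$), the failure of the twist condition on $\mcc$, and the fact that for the parameters $\theta_K$ of Corollary \ref{cor:inv_circ_foliation} the dynamics is completely integrable on a large annulus, so no almost-hyperbolic cone field can exist there --- which already shows any cone construction cannot be uniform in $\theta$ and must somehow be shown to work off a null set of parameters. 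Finally, even granting (i) and (ii), your step (iii) needs the complement of the elliptic islands in each annulus to have positive measure and to be exactly the ergodic component; controlling the total measure and accumulation of island chains is the analogue of the famously open positive-measure ergodicity problem for the standard map, and neither the Hopf--Sinai argument nor \cite{Wojtkowski1981,Wojtkowski2008,Bullett1986} delivers it here. Your outline is a reasonable articulation of how one might eventually attack the conjecture, but none of its three stages is established.
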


Numerical experiments also seem to indicate that for many parameter values,
the way in which chaotic and (quasi-)periodic behaviour coexist,
that is, the structure of invariant annuli 
containing families of quasi-periodic elliptic islands, can be quite rich, 
see Fig.\ref{fig:erg_reg} and Fig.\ref{fig:erg_reg_zoom}.
Besides the total measure of such quasi-periodic elliptic islands, 
it would be also interesting to know whether a general scheme 
for their itineraries, periods and rotation numbers can be found.

\section*{Acknowledgements}
I would like to thank my PhD supervisor, Sebastian van Strien, for the numerous helpful discussions
while I was doing the work presented in this paper. I am also grateful to Shaun Bullett for
discussing with me his early work on the piecewise linear standard map and 
making me aware of some of the references.

\section*{Appendix. Proofs of Lemmas \ref{lem:B_int_it} and \ref{lem:A_int_it}}

\begin{proof}[Proof of Lemma \ref{lem:B_int_it}]
First, recall that the map $B\colon \mcb_1 \to \mcb_1 \cup \mca_1 \cup \mcc$ leaves invariant 
the quadratic form (\ref{eq:quad_form_B}). Then, using $P_4 = (1-s, t)$ and $P_1 = (s, t)$, we 
calculate
\[
Q_B(P_4) - Q_B(P_1) = 
\left((1-s)^2 + t^2 - \frac{(1-s)t}{t}\right) - \left(s^2 + t^2 - \frac{st}{t}\right) = 0,
\]
and hence $Q_B(P_4) = Q_B(P_1) =: c$. 
So the line $\{x \in \mcb_1 : Q_B(x) = c \}$ is a segment of a hyperbola connecting $P_4$ and $P_1$. 
Therefore, with $V = \{ x \in \mcb_1 : Q_B(x) \leq c \}$, we get $F(V) = B(V) \subset (V \cup \mca_1)$. 

Further, note that $B$ maps rays through $E_1$ to other rays through $E_1$. 
This implies that all points on the straight line segment $\overline{E_1 P_4} = \mca_4 \cap \mcb_1$ 
remain in the piece $\mcb_1$ for equally many 
iterations of the map $F$, before being mapped into $\mca_1$.
In particular, if for $X \in \overline{E_1 P_4}$ we have that 
\begin{equation}\label{eq:B_int_it}
 F^k (X) \in 
\begin{cases} \mcb_1 & \text{if } 0 \leq k < K,\\
\overline{E_1 P_1} & \text{if } k = K ,
\end{cases}
\end{equation}
then the same holds for every other point $X' \in \overline{E_1 P_4}$, and $F^K(\overline{E_1 P_4}) = \overline{E_1 P_1}$.

We will now show that there exists a sequence of parameter values $\theta_K$, $K \geq 3$,
such that for $F= F_{\theta_K}$, $F^k(P_4) = B^k(P_4) \in \mcb_1$ for $0 \leq k \leq K$
and $F^K(P_4) = P_1$.
For this, we need a few elementary facts about the map $B$, 
which follow from straightforward (but rather tedious) calculations:
\begin{itemize}
 \item Let $f(t) = \sqrt{1-4t^2}$. Then the hyperbolic map $B$ has two eigendirections 
\[
 v_1 = \begin{pmatrix} 1 + f(t) \\ 2t \end{pmatrix},\quad v_2 = \begin{pmatrix} 2t \\ 1 + f(t) \end{pmatrix}
\]
with corresponding eigenvalues
\begin{equation}\label{eq:eig_vals}
 \lambda_1 = \lambda = \frac{4t^2 - 2t + (2s - 1) (1+f(t))}{2(t-s)} > 1, \quad 
 \lambda_2 = \lambda^{-1} < 1.
\end{equation}
\item By a linear change of coordinates $\Phi \colon \R^2 \to \R^2$ 
mapping $v_1$ and $v_2$ to $(1, 0)$ and $(0,1)$, respectively, one gets a conjugate linear map
\[
 \tilde B = \Phi \circ B \circ \Phi^{-1} 
= \begin{pmatrix} \lambda & 0 \\ 0 & \lambda^{-1} \end{pmatrix}.
\]
Setting $\Phi ( P_4 ) = \Phi (t, 1-s) =: (x_1, y_1)$ and $\Phi ( P_1 ) = \Phi (s, t) =: (x_2, y_2)$,
a somewhat tedious calculation gives 
\begin{equation}\label{eq:frac_pts}
 \frac{x_2}{x_1} = \frac{2ts + t(f(t)-1)}{2t^2 + (1-s)(f(t)-1)}.
\end{equation}
\end{itemize}

Now, since $\tilde B^K = \Phi \circ B^K \circ \Phi^{-1}$, 
it follows that $B^K(P_4) = P_1$ is equivalent to $\tilde B^K \Phi ( P_4 ) = \Phi (P_1)$. 
By the simple form of $\tilde B^K$, this is equivalent to $x_2 = \lambda^K x_1$, that is,
\begin{equation}\label{eq:K}
 K = \frac{\log(x_2 / x_1)}{\log(\lambda)}.
\end{equation}

Substituting (\ref{eq:eig_vals}) and (\ref{eq:frac_pts}) into (\ref{eq:K}), and 
using $s = \sqrt{t - t^2}$ from (\ref{eq:relations}), we get an expression $K = K(t)$ for $0 < t < \frac{1}{2}$. 
Then $K$ is differentiable and strictly monotonically decreasing as a function of $t$, 
and (by application of L'H\^{o}pital's rule)
\[
 K \to \begin{cases} 2 & \text{ as } t \to 0, \\ 
\infty & \text{ as } t \to \frac{1}{2}.\end{cases}
\]
Since $\sin^2 \theta = t$ with $t \in (0, \frac{1}{2})$, $\theta \in (0,\frac{\pi}{4})$, we get that
for each $K \geq 3$ there exists $\theta_K \in (0,\frac{\pi}{4})$ such that $B^K(P_4) = P_1$, 
hence $B^K(\overline{E_1 P_4}) = \overline{E_1 P_1}$, as claimed. 
\end{proof}

\begin{proof}[Proof of Lemma \ref{lem:A_int_it}]
First, the shear map $A \colon \mca_1 \to \mca_1 \cup \mcb_2$ is such that any point
$(x,y) \in \mca_1$ is mapped to $F(x,y) = A(x,y) = (x + \tilde{c} y, y)$, $\tilde c > 0$.
By continuity, it follows that for any $N \geq 0$ there exists a point 
$Y_N \in \overline{E_1 P_1} = \mcb_1 \cap \mca_1$ such that
$F^n(Y_N) \in \mca_1$ for $0 \leq n < N$ and $F^N(Y_N) \in \overline{E_2 P_1} = \mca_1 \cap \mcb_2$. 
Clearly, $Y_0 = P_1 = (s, t)$, and one can calculate that
\[
Y_N = \frac{1}{1 + N(1-2s)} (s,t), \quad N \geq 0,
\]
and $Y_N \to (0,0) = E_1$ as $N \to \infty$.
\end{proof}

\bibliographystyle{abbrv}
\bibliography{../BibTex/library}

\end{document}